\newcommand{\IP}{I\!P}
\newcommand{\LIP}{LI\!P}
\newcommand{\OIP}{O\!I\!P}
\renewcommand{\vec}[1]{\ensuremath{\mathbf{#1}}}
\providecommand{\email}[1]{\href{mailto:#1}{#1}}
\newtheorem{definition}{Definition}
\newtheorem{lemma}{Lemma}
\newtheorem{theorem}{Theorem}
\newtheorem{example}{Example}
\newtheorem{remark}{Remark}
\title{Multi-objective integer programming: Synergistic parallel approaches}
\author{William Pettersson \\ School of Computing Science, University of Glasgow \\ Glasgow G12 8QQ, \textsc{United Kingdom} \\
\email{william@ewpettersson.se} \and
Melih Ozlen \\
School of Science, RMIT University, \\ 
  Victoria~3000, \textsc{Australia}. \\
  \email{melih.ozlen@rmit.edu.au}
}
\begin{document}
\maketitle

\begin{abstract}
  Exactly solving multi-objective integer programming (MOIP) problems is often a very time consuming process, especially for large and complex problems.
  Parallel computing has the potential to significantly reduce the time taken to solve such problems, but only if suitable algorithms are used.
  The first of our new algorithms follows a simple technique that demonstrates impressive performance for its design.
  We then go on to introduce new theory for developing more efficient parallel algorithms.
  The theory utilises elements of the symmetric group to apply a permutation to the objective functions to assign different workloads, and
  applies to algorithms that order the objective functions lexicographically.
  As a result, information and updated bounds can be shared in real time, creating a synergy between threads.
  We design and implement two algorithms that take advantage of such theory.
  To properly analyse the running time of our three algorithms, we compare them against two existing algorithms from the literature, and against using multiple threads within our chosen IP solver, CPLEX.
  This survey of six different parallel algorithms, the first of its kind, demonstrates the advantages of parallel computing.
  Across all problem types tested, our new algorithms are on par with existing algorithms on smaller cases and massively outperform the competition on larger cases.
  These new algorithms, and freely available implementations, allows the investigation of complex MOIP problems with four or more objectives.
\end{abstract}

\section{Introduction}
\subsection{Background}

In multi-objective integer programming (MOIP), one must consider a range of objective functions with the goal of finding all non-dominated objective vectors, sometimes called the Pareto set.
A decision maker can use such a set to compare the various trade-offs that can be made between the objective functions.

The Pareto set can be calculated exactly, or approximated.
Approximation techniques include
 heuristics (or meta-heuristics), swarming (such as those of~\cite{Parsopoulos2002ParticleSwarm,Sethanan2016Swarming}) and evolutionary (see~\cite{cantu2000efficient,Reed2008ParallelEvolutionary}) algorithms.
However, this paper will only consider algorithms which calculate the exact Pareto set, with no omissions or inaccuracies.
For an introduction to multi-objective optimisation in general, see~\cite{Ehrgott2000Survey}, and for a very recent and thorough look at exact MOIP algorithms, focusing on branch and bound algorithms, see~\cite{Przybylski2017}.

This paper looks at parallel multi-objective exact integer optimisation algorithms.
Parallel evolutionary algorithms that find approximate solutions have received significant study in the literature, such as in~\cite{Yu2017Parallel,Pedroso2017Parallel}, but results on exact parallel algorithms for multi-objective problems are not as widespread.
\cite{Lemesre2007PPM} introduce PPM, an algorithm which splits the feasible solution space through a three-stage process.
They first find what they call ``well-distributed solutions'', and then use these solutions to partition the feasible solution space into regions which can be searched in parallel.
\cite{Dhaenens2010KPPM} then extend this work to create K-PPM, which solves problems with more than two objectives.
Being one of the only published algorithms specifically described as a parallel MOIP algorithm, we use it as one of our comparison algorithms.

More recently, \cite{Guo2014ScalingExact} demonstrates parallel improvements through problem-specific information, using specifics of their problems to break down the set of feasible solutions into equitable parts.
We work with generic optimisation problems, and as such the algorithms of Guo et.~al.~cannot solve the problems we test against.

Another method of achieving parallelisation is to iteratively find solutions, and use these solutions to split the objective space into smaller parts.
Each of these smaller parts can then often be independently searched, as mentioned but not implemented in~\cite{boland2015balancedbox}.
This idea, of breaking down the objective space, can also be seen in algorithms that are not necessarily described as parallel, such as in~\cite{Lokman2013,boland2015triangle,Kirlik2014NewAlgorithm,Laumanns2006EfficientAdaptive,Dächert2015}.
We implement V-SPLIT from~\cite{Dächert2015} as our second comparison algorithm, as they prove that it reaches the theoretical best-case in terms of integer problems solved, and they show that it is one of the two faster algorithms in the literature, the second being AIRA by~\cite{Ozlen2014moipaira} which we will also parallelise.
V-SPLIT is only a 3-objective algorithm, unlike all other algorithms discussed, so timing results for V-SPLIT are only available on 3-objectives.

\subsection{Our contribution}
We present three new algorithms.
The first of these calculates the range of values that one of the objective functions may take, and divides this range equally amongst all threads.
Unlike existing algorithms, this partitioning takes place before any searching for solutions.
Timing results show that running time does improve as more threads are used, and that the performance is at least on par with other algorithms in the literature.

We propose a new parallelisation technique for MOIP algorithms.
Threads are given a unique approach to the problem (as determined by an element of the symmetric group $S_n$).
This approach, but limited to the biobjective case where the theory is trivial and there is no synergy, is used in~\cite{OzlenPettersson2016BiObjective} where it just equally partitions the objective space.
We present results which allow the real-time sharing of bounds to all other threads.
Even though each thread is solving the same problem, this sharing creates a synergy between the threads.
As one thread reduces the required computation for a second thread, the second thread will in turn reduce the required running time for the first thread.
This synergy can allow significant performance improvements from parallelisation, especially in problems where the number of objectives is large.
This theory is given as a theoretical background in Section~\ref{sec:sharing} so that it may be used and extended in other parallel algorithms.

We design, implement and test two algorithms based on this theory.
These algorithms are compared to the other state-of-the-art algorithms.
The results show that the new algorithms perform on par on smaller problems, and outperform the existing algorithms on larger problems.
This validates the synergy evident in the theory: we give experimental results that show that the algorithms perform better than existing algorithms across all problems when the thread count is equal to the number of objectives.
Even when thread counts are increased beyond this level, we still see our new algorithms scaling well and outperforming the other existing algorithms on all larger problem instances, and performing at a similar level for the smaller problem instances.

We offer our implementations of these algorithm for further use.
This opens up many new opportunities to solve new problems in optimisation not only where more variables or more objective functions need to be considered, but also in more time-critical scenarios.

\subsection{Paper layout}
The rest of this paper is organised as follows.
Section~\ref{sec:background} gives a background and details the notation we use to describe the symmetric group, symmetries and lexicographically constrained MOIP problems.
In Section~\ref{sec:sharing} we give the theory that demonstrates the sharing of results between still-running threads.
Section~\ref{sec:newalgo} describes our new algorithm, and Section~\ref{sec:impl} discusses some implementation details.
The results of our testing are presented and discussed in Section~\ref{sec:discussion}.
Finally we conclude in Section~\ref{sec:conclusion}.

\section{Background}\label{sec:background}

\subsection{Permutations}

We will use permutations to denote different hierarchical orderings of objectives in lexicographic restrictions of MOIPs.
We will use $S_n$ to denote the symmetric group on the $n$ elements $\{1,2,\ldots,n\}$.
Given a permutation $s \in S_n$, let $s(i)$ be the image of $i$ under $s$.
For example, let $s = (3,2,4,1) \in S_4$. Then $s(1) = 3$, $s(2) = 2$, $s(3) = 4$ and $s(4) = 1$.

\subsection{Multi objective optimisation}

A MOIP is defined as
\begin{align*}
\min \quad  & f_1(\vec x), \ldots, f_n(\vec x) \\
\mbox{~s.t.}  \quad & A\vec x \leq C \\
& \vec x \in \mathbb{Z}^c
\end{align*}

where the matrices $A$ and $C$ are appropriately sized.
For a given feasible solution $\vec x$, we call the associated vector $(f_1(\vec x), f_2(\vec x), \dots, f_n(\vec x))$ the objective vector of $\vec x$.
Where the feasible solution $\vec x$ may not be relevant, we may refer to such a vector as simply an {\em objective vector\/} of the MOIP problem.
Note that there is no guarantee that an objective vector need be optimal, it is simply a vector in the objective space that corresponds to some feasible value of $\vec x$.

\begin{definition}
  A vector $(z_1,z_2,\ldots,z_n)$ is said to dominate vector $(y_1,y_2,\ldots,y_n)$ if
  \begin{enumerate}
    \item $z_k \leq y_k$ for all $k \in \{1,\ldots,n\}$, and
    \item $z_k < y_k$ for at least one $k\in \{1,\ldots,n\}$.
  \end{enumerate}
\end{definition}
The {\em non-dominated\/} objective vectors for a MOIP problem are then exactly the objective vectors which are not dominated. There might be more than one efficient solution in the solution space corresponding to a non-dominated objective vector but we are only concerned with identifying one of those efficient solutions for our purposes.

For conciseness, we assume each objective function is to be minimised, and
for more details on multi-objective optimisation we guide the reader to~\cite{Ehrgott2000Survey}.


The algorithm of~\cite{Ozlen2014moipaira} repeatedly solves ordered, constrained versions of a given MOIP. These consider some of the objectives in a specific order, and also apply upper bounds on the values of some of the objective functions (hence constrained). We will write $\OIP_s^n(k, (a_{s(k+1)},\ldots, a_{s(n)}))$ to refer to such a problem, where
\begin{itemize}
\item $s$ denotes the order in which the objectives are considered;
\item $n$ is the number of objectives in total;
\item $k$ is the number of objectives which have no upper bound; and
\item for $i\in \{k+1,\ldots,n\}$, $a_{s(i)}$ is an upper bound on the value of $f_{s(i)}(\vec x)$ (i.e.~add $f_{s(i)}(\vec x) \leq a_{s(i)}$ as a constraint to the problem).
\end{itemize}

That is, given a MOIP with objective functions $f_1,\ldots,f_n$, we define the $\OIP_s^n(k, (a_{s(k+1)},\ldots, a_{s(n)}))$ as follows:

\begin{align*}
\min \;\; 1\textrm{st} \quad  & f_{s(1)}(\vec x), \ldots, f_{s(k)}(\vec x) \\
\min \;\; 2\textrm{nd} \quad & f_{s(k+1)}(\vec x) \\
\min \;\; 3\textrm{rd} \quad & f_{s(k+2)}(\vec x) \\
 & \quad \vdots \\
\min \;\; \textrm{last} \quad & f_{s(n)}(\vec x) \\
\mbox{~s.t.}  \quad & A\vec x \leq C \\
& f_{s(k+1)}(\vec x) \leq a_{s(k+1)} \\
& f_{s(k+2)}(\vec x) \leq a_{s(k+2)} \\
& \quad \vdots \\
& f_{s(n)}(\vec x) \leq a_{s(n)} \\
& \vec x \in \mathbb{Z}^c
\end{align*}

In these problems, the first $k$ objective functions (from $f_{s(1)}$ to $f_{s(k)}$) are considered in the usual manner for MOIP problems (i.e.~objective vectors which are not dominated in these first $k$ objectives).
Conditional on that, the problem then minimises $f_{s(k+1)}$, then $f_{s(k+2)}$ and so-on.
It is routine to verify that the set of non-dominated objective vectors to this OIP are a subset of the non-dominated objective vectors for the corresponding MOIP.

Note that \cite{Ozlen2014moipaira} only consider the objectives in their natural order (i.e.~under the identity permutation).
Considering different permutations allows us to synergise between parallel threads.

We now give some properties of the set of non-dominated objective vectors for an ordered, constrained lexicographic restriction of a MOIP.

\begin{remark}\label{definition:lips}
  Let $f_1, \ldots, f_n$ be $n$ linear objective functions for a MOIP with feasible solution space $X$, let $s\in S_n$ and let $Y$ be the set of non-dominated objective vectors for the associated constrained ordered problem $\OIP_s^n(k, (a_{s(k+1)}, \ldots, a_{s(n)}))$.
Then for any $y \in Y$

\begin{enumerate}
  \item for any $i$ with $k < i \leq n$, $f_{s(i)}(y) \leq a_{s(i)}$,\label{definition:lips-constraint}
  \item for any $y' \in Y$ with $y' \neq y$, there exists a $j \leq k \text{ s.t. } f_{s(j)}(y) < f_{s(j)}(y')$, and\label{definition:lips-firstk}
  \item for any $x \in X$ with $f_{s(i)}(x) = f_{s(i)}(y)$ for $i \leq k$, there exists a $j \leq n$ such that for all $j' < j$, $f_{s(j')}(y) = f_{s(j')}(x)$ and $f_{s(j)}(y) < f_{s(j)}(x)$.\label{definition:lips-lastn-k}
\end{enumerate}

\end{remark}

In the above, \emph{\ref{definition:lips-constraint}.}~indicates that all non-dominated objective vectors must meet the given bounds.
\emph{\ref{definition:lips-firstk}.}~shows that any non-dominated objective vector cannot be dominated by another in all of the first $k$ objectives.
Lastly, \emph{\ref{definition:lips-lastn-k}.}\ says that if two objective vectors agree in their first $k$ objectives, the final $n-k$ objectives are considered in the order given by the permutation~$s$.

Technically, these constrained lexicographic problems may more accurately be described as a partially constrained, partially lexicographic problems, but this wording gets cumbersome and is skipped in favour of simply constrained lexicographic.

We now give Lemma 4.1 from~\cite{Ozlen2009GeneralApproach}. 
We will later give a restatement of this Lemma (see Theorem \ref{thm:new-recurse}) which (a) takes into consideration different permutations of the objective functions (the original lemma assumes the objective functions are already ordered); and (b) states the implications more explicitly so that the correctness of our algorithm is easier to confirm.

\begin{lemma}[Lemma 4.1 from~\cite{Ozlen2009GeneralApproach}]
If a solution to a $k$ objective problem attains the upper bound on one of these objectives, say $f_i$, then it is also optimal on the $k-1$-objective problem where $f_i$ is no longer considered.
\end{lemma}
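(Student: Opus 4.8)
The plan is to argue by contradiction using the standard hierarchy of lexicographic optimality. Let $\vec x^\ast$ be the solution in question to the $k$-objective problem, and suppose it attains the upper bound on $f_i$, i.e.\ $f_i(\vec x^\ast) = a_i$. The claim is that, in the $(k-1)$-objective problem obtained by dropping $f_i$ from the objective list (but keeping all constraints, including $f_i(\vec x) \le a_i$ if it was present, or simply keeping the remaining structure), $\vec x^\ast$ is still optimal. First I would make precise what ``optimal'' means here: in the ordered/constrained setting of the excerpt, optimality of $\vec x^\ast$ means that no feasible $\vec x$ beats $\vec x^\ast$ in the lexicographic-after-first-block ordering, equivalently the conditions in Remark~\ref{definition:lips} hold with $Y = \{\text{objective vector of } \vec x^\ast\}$ restricted appropriately.

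The key steps, in order: (1) Suppose for contradiction that some feasible $\vec x'$ is strictly better than $\vec x^\ast$ in the $(k-1)$-objective problem. Since $f_i$ has been removed, $\vec x'$ need not respect any relationship to $a_i$ on the $f_i$ coordinate only through the remaining constraints; but crucially $f_i(\vec x') \le a_i = f_i(\vec x^\ast)$ is \emph{not} automatically available, so I must handle the $f_i$-coordinate separately. (2) Build the objective vector of $\vec x'$ in the original $k$-objective ordering by inserting the $f_i$ value back in its place. Because $\vec x'$ is feasible for the original problem, $f_i(\vec x') \le a_i$ (the bound constraint on $f_i$ is still present as a constraint — it is only dropped as an \emph{objective}), so $f_i(\vec x') \le a_i = f_i(\vec x^\ast)$. (3) Now compare $\vec x'$ and $\vec x^\ast$ in the original $k$-objective lexicographic order: on every objective other than $f_i$, $\vec x'$ is at least as good and somewhere strictly better (that is what ``strictly better in the $(k-1)$-objective problem'' gives), and on $f_i$ it is no worse. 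Hence $\vec x'$ dominates or lexicographically precedes $\vec x^\ast$ in the $k$-objective problem, contradicting the optimality of $\vec x^\ast$ there. (4) Conclude that no such $\vec x'$ exists, so $\vec x^\ast$ is optimal for the $(k-1)$-objective problem.

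The main obstacle is step~(2)/(3): the argument hinges entirely on the observation that when we ``no longer consider'' $f_i$, the bound $f_i(\vec x) \le a_i$ remains as a hard constraint, so the feasible region does not expand in a way that could admit a solution with a larger $f_i$ value. If instead one interpreted ``no longer considered'' as also deleting the constraint $f_i(\vec x)\le a_i$, the lemma would be false, so I would state this interpretation explicitly at the start. A secondary subtlety is that when $i \le k$ is one of the first-block (non-lexicographic) objectives rather than one of the later lexicographic ones, the notion of ``optimal'' shifts slightly; I would note that the same insertion argument works verbatim in both cases because attaining the upper bound pins the $f_i$-coordinate to its minimum feasible value, which is exactly what is needed to slot $\vec x'$ back into the $k$-objective comparison without loss.
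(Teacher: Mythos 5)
Your core mechanism is sound under the reading you adopt, and it is worth noting that the paper never actually proves this quoted lemma itself --- it is imported from \cite{Ozlen2009GeneralApproach}, and what the paper proves instead is the generalisation Theorem~\ref{thm:new-recurse}. Under your reading ($f_i(\vec x^\ast)=a_i$ for a bound $f_i(\vec x)\le a_i$ that survives as a hard constraint once $f_i$ is dropped as an objective), your argument is correct and complete: every competitor $\vec x'$ in the $(k-1)$-objective problem satisfies $f_i(\vec x')\le a_i=f_i(\vec x^\ast)$, so strict domination in the remaining $k-1$ objectives lifts to domination in all $k$, contradicting non-dominance of $\vec x^\ast$. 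This is more elementary than the paper's route for Theorem~\ref{thm:new-recurse}, which takes a putative missed solution $y'$, picks a dominator $y\in Y$, locates the first index at which $y'$ beats $y$, and runs a three-way case analysis ($i<k$, $i=k$, $i>k$, the last handling the lexicographic tail). That extra machinery buys the stronger ``window'' conclusion that every non-dominated vector with $f_{s(k)}$-value anywhere in $[\hat a, a_{s(k)}]$ --- not merely equal to the bound --- is already found, which is what the recursion in Algorithm~\ref{algo:sub} actually needs; your boundary-case argument would not justify that step on its own.

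The genuine concern is your closing remark that the lemma ``would be false'' if the bound $f_i(\vec x)\le a_i$ were not retained as a constraint. The paper explicitly identifies Lemma 4.1 with Lemma~\ref{lemma:upper}, which reads ``achieves a maximum value on $f_n$'' over the non-dominated set and carries no constraint on $f_n$ at all; under that reading the lemma is still true, but your step (2) has nothing to lean on, since a competitor may have $f_i(\vec x')>f_i(\vec x^\ast)$. The repair is one extra move you are missing: such an $\vec x'$ exceeds the maximum of $f_i$ over the non-dominated set, hence is itself dominated by some non-dominated $\vec x''$ with $f_i(\vec x'')\le f_i(\vec x^\ast)$ and $f_j(\vec x'')\le f_j(\vec x')\le f_j(\vec x^\ast)$ for all $j\ne i$; non-dominance of $\vec x^\ast$ then forces $\vec x''$ to coincide with $\vec x^\ast$ in objective space, contradicting the strict improvement claimed for $\vec x'$. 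Without this step your proof covers only one of the two readings the paper itself uses, and mislabels the other as a counterexample rather than a case requiring a different argument.
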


\section{Bound sharing}\label{sec:sharing}
This section details our contributions, including both the theory behind our new algorithms as well as our new parallel algorithms.
In these new algorithms, we assign different ordered variants of the input MOIP problem to different threads by means of a permutation $s \in S_n$.
As each thread progresses, it finds non-dominated objective vectors, but also tracks the region of the objective space which it has completely searched.
A thread will share information about such searched regions with other threads, reducing the search space for these other threads.

This section is broken into two subsections: first we give the theory required to prove correctness and later we give the actual algorithm.

\subsection{Theoretical results}
\begin{lemma}\label{lemma:drop-k}
  If $x$ is a non-dominated objective vector for $\OIP^n_s(k-1, (a_{s(k)},\dots,a_{s(n)}))$,
  then $x$ is a non-dominated objective vector for $\OIP^n_s(k, (a_{s(k+1)},\allowbreak \dots,a_{s(n)}))$.
\end{lemma}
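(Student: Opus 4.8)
The plan is to fix a non-dominated objective vector $x$ of $\OIP^n_s(k-1,(a_{s(k)},\dots,a_{s(n)}))$ and verify directly that it meets the defining conditions of a non-dominated objective vector of $\OIP^n_s(k,(a_{s(k+1)},\dots,a_{s(n)}))$. Write $X_1$ and $X_2$ for the feasible regions of the two problems. Since the second problem is obtained from the first precisely by deleting the constraint $f_{s(k)}(\vec x)\leq a_{s(k)}$, we have $X_1\subseteq X_2$, so $x$ is feasible for the second problem, and by part~\ref{definition:lips-constraint} of Remark~\ref{definition:lips} it satisfies all of that problem's bounds. It then remains to establish two things: (i) $x$ is non-dominated in the objectives $f_{s(1)},\dots,f_{s(k)}$ over $X_2$; and (ii) among the solutions of $X_2$ that agree with $x$ on $f_{s(1)},\dots,f_{s(k)}$, the vector $x$ is lexicographically smallest in $(f_{s(k+1)},\dots,f_{s(n)})$. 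Together with feasibility, (i) and (ii) are exactly what it means for $x$ to be a non-dominated objective vector of the second $\OIP$.

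The observation that makes the lemma work is that any solution of $X_2$ capable of threatening $x$ already lay in $X_1$. Suppose $x''\in X_2$ either weakly dominates $x$ in $f_{s(1)},\dots,f_{s(k)}$ with some strict inequality, or agrees with $x$ on all of $f_{s(1)},\dots,f_{s(k)}$; either way $f_{s(k)}(x'')\leq f_{s(k)}(x)$. Applying part~\ref{definition:lips-constraint} of Remark~\ref{definition:lips} to the first problem gives $f_{s(k)}(x)\leq a_{s(k)}$, hence $f_{s(k)}(x'')\leq a_{s(k)}$; combined with $f_{s(i)}(x'')\leq a_{s(i)}$ for $i>k$ (which holds since $x''\in X_2$), this shows $x''\in X_1$. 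So the extra objective that moves into the Pareto block carries no new competitors with it, and the non-dominance of $x$ for the first problem can be invoked against $x''$.

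To close the argument I would split on where such an $x''$ first beats $x$. If $f_{s(j)}(x'')<f_{s(j)}(x)$ for some $j\leq k-1$, then, as $f_{s(j')}(x'')\leq f_{s(j')}(x)$ for all $j'\leq k-1$, the solution $x''\in X_1$ strictly dominates $x$ in $f_{s(1)},\dots,f_{s(k-1)}$, contradicting that $x$ is non-dominated in those objectives for the first $\OIP$. Otherwise $x''$ agrees with $x$ on $f_{s(1)},\dots,f_{s(k-1)}$; since $x$ is lexicographically minimal in $(f_{s(k)},\dots,f_{s(n)})$ among the $X_1$-solutions sharing its first $k-1$ objective values (part~\ref{definition:lips-lastn-k} of Remark~\ref{definition:lips} for the first problem, which is how that $\OIP$ treats its lexicographic objectives), and $f_{s(k)}(x'')\leq f_{s(k)}(x)$, we obtain $f_{s(k)}(x'')=f_{s(k)}(x)$ together with $(f_{s(k+1)}(x),\dots,f_{s(n)}(x))\leq_{\mathrm{lex}}(f_{s(k+1)}(x''),\dots,f_{s(n)}(x''))$. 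In the domination scenario of the previous paragraph this contradicts the remaining strict inequality, which must then occur at index $k$, proving~(i); in the tie scenario the same inequality is precisely statement~(ii). The step I expect to demand the most care is pinning down the equivalence between the two $\OIP$ definitions — in particular confirming that the properties recorded in Remark~\ref{definition:lips} are enough to re-establish membership in the non-dominated set, and disposing of the harmless case in which $x''$ shares the objective vector of $x$.
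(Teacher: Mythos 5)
Your argument is correct and is precisely the verification the paper compresses into the single sentence ``This result follows trivially from Remark~\ref{definition:lips}'': the key observation in both cases is that any competitor $x''$ of $x$ in the relaxed problem satisfies $f_{s(k)}(x'')\leq f_{s(k)}(x)\leq a_{s(k)}$ and hence was already feasible for the constrained problem, so the non-dominance and lexicographic minimality of $x$ there can be invoked. Your case split on the first index where $x''$ beats $x$ correctly handles both the new Pareto objective $f_{s(k)}$ and the tie-breaking via part~\ref{definition:lips-lastn-k} of Remark~\ref{definition:lips}, so no gap remains.
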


This result follows trivially from Remark~\ref{definition:lips}.

Before the next theorem, we define notation for showing that two permutations agree in their ``final'' positions.
\begin{definition}[$s =_a s'$]
  Given two elements $s, s' \in S_n$, if $s(i) = s'(i)$ for all $(n-a) < i \leq n$, we say that $s =_a s'$.
\end{definition}
For example, if $s = (4,1,2,3)$ and $s' = (1,4,2,3)$ then $s =_2 s'$ as both permutations end with ``$\left. 2,3\right)$''.

We now give a slight variant of Lemma 4.1 from~\cite{Ozlen2009GeneralApproach} to allow for different permutations of the objective functions.  

\begin{theorem}\label{thm:new-recurse}
  Let $s, s'$ be elements of $S_n$ with $s =_{n-k} s'$,
  let $Y$ be the set of non-dominated objective vectors for $\OIP_s^n(k-1, (a_{s(k)},\dots,a_{s(n)}))$, let $\hat a = \max\{f_{s(k)}(y) | y\in Y\}$, and
  let $Y'$ be the set of non-dominated objective vectors for $\OIP_{s'}^n(k, (a_{s(k+1)},\dots,\allowbreak a_{s(n)}))$.
Then for any $y' \in Y'$, either
\begin{enumerate}
  \item $y' \in Y$, or 
  \item $f_{s(k)}(y') > a_{s(k)}$, or 
  \item $f_{s(k)}(y') < \hat a$.
\end{enumerate}
\end{theorem}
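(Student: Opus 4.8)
The plan is to prove the only non-trivial case: assuming $y'\in Y'$ together with $f_{s(k)}(y')\le a_{s(k)}$ (so alternative~2 fails) and $f_{s(k)}(y')\ge\hat a$ (so alternative~3 fails), I will show $y'\in Y$. If $Y=\emptyset$ then $\OIP_s^n(k-1,\cdot)$ is infeasible, and since any $y'\in Y'$ already meets the bounds $a_{s(k+1)},\dots,a_{s(n)}$ it must violate the bound $a_{s(k)}$, so alternative~2 holds and we are done; hence assume $Y\neq\emptyset$, so that $\hat a$ is a well-defined integer with $\hat a\le a_{s(k)}$. Throughout I use the hierarchical description of the non-dominated set of an OIP that is implicit in its definition and spelled out by Remark~\ref{definition:lips}: writing $v^{(t,m)}:=(f_{t(1)}(v),\dots,f_{t(m)}(v))$ and $v^{(t,m{:}n)}:=(f_{t(m)}(v),\dots,f_{t(n)}(v))$, a feasible objective vector $v$ belongs to the non-dominated set of $\OIP_t^n(m,\cdot)$ exactly when $v^{(t,m)}$ is non-dominated among the first-$m$ projections of feasible points, and $v$ lexicographically minimises $(f_{t(m+1)},\dots,f_{t(n)})$ among feasible points sharing its first $m$ coordinates.

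First I would remove the two permutations from the picture. Since $s=_{n-k}s'$ the permutations agree on positions $k+1,\dots,n$, so $\{s(1),\dots,s(k)\}=\{s'(1),\dots,s'(k)\}$, the lexicographic tails coincide, and the bounds $a_{s(k+1)},\dots,a_{s(n)}$ coincide; because the first $k$ objectives of an OIP form an unordered block, $\OIP_{s'}^n(k,(a_{s(k+1)},\dots,a_{s(n)}))$ is literally the same optimisation problem as $Q:=\OIP_s^n(k,(a_{s(k+1)},\dots,a_{s(n)}))$, so I may treat $Y'$ as the non-dominated set of $Q$. Set $P:=\OIP_s^n(k-1,(a_{s(k)},\dots,a_{s(n)}))$. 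Comparing constraints, $\mathrm{feas}(P)\subseteq\mathrm{feas}(Q)$ because $P$ differs from $Q$ only by the single extra constraint $f_{s(k)}(\vec x)\le a_{s(k)}$; and $y'$ satisfies that constraint by the failure of alternative~2, so $y'$ is feasible for $P$.

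It remains to verify the two hierarchical conditions placing $y'$ in $Y$. For the first condition --- $y'^{(s,k-1)}$ is non-dominated among first-$(k-1)$ projections of feasible points of $P$ --- suppose instead some $w$ feasible for $P$ has $w^{(s,k-1)}\le y'^{(s,k-1)}$ with at least one strict coordinate. Since objective values are integral and bounded below on the feasible region, $w^{(s,k-1)}$ is weakly dominated by some non-dominated first-$(k-1)$ projection $u$ of feasible points of $P$, and the corresponding member $y_u\in Y$ with $y_u^{(s,k-1)}=u$ (which exists by the characterization above) has $y_u^{(s,k-1)}=u\le w^{(s,k-1)}\le y'^{(s,k-1)}$ with a strict coordinate, while $f_{s(k)}(y_u)\le\hat a\le f_{s(k)}(y')$ by definition of $\hat a$ and the failure of alternative~3. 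Hence $y_u^{(s,k)}\le y'^{(s,k)}$ with a strict coordinate; as $y_u$ is feasible for $P$ and therefore for $Q$, this contradicts $y'\in Y'$. For the second condition --- $y'$ lexicographically minimises $(f_{s(k)},\dots,f_{s(n)})$ among feasible points of $P$ whose first $k-1$ coordinates equal $y'^{(s,k-1)}$ --- suppose some $w$ feasible for $P$ with $w^{(s,k-1)}=y'^{(s,k-1)}$ satisfies $w^{(s,k{:}n)}<_{\mathrm{lex}}y'^{(s,k{:}n)}$. If $f_{s(k)}(w)<f_{s(k)}(y')$ then $w^{(s,k)}\le y'^{(s,k)}$ with a strict last coordinate, again contradicting $y'\in Y'$; otherwise $f_{s(k)}(w)=f_{s(k)}(y')$, so $w$ agrees with $y'$ on all of its first $k$ coordinates but is lexicographically smaller on $(f_{s(k+1)},\dots,f_{s(n)})$, contradicting that $y'$ is the lexicographically minimal completion forced by $y'\in Y'$. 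Both conditions hold, so $y'\in Y$.

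I expect the main obstacle to be the bookkeeping rather than any single deduction: fixing the precise hierarchical meaning of ``non-dominated objective vector of an OIP'' so that the two conditions above are genuinely equivalent to membership, keeping track of which feasible region ($P$'s or $Q$'s) each auxiliary point inhabits, and --- crucially --- noticing that the hypothesis $f_{s(k)}(y')\ge\hat a$ is used exactly once, in the step $f_{s(k)}(y_u)\le\hat a\le f_{s(k)}(y')$; without it $y_u$ need not beat $y'$ in the $k$-th coordinate and the argument for the first condition collapses. The only ingredient outside pure combinatorics is the standard well-foundedness of the objective space (no infinite descending chains), used to replace an arbitrary dominating point by a non-dominated one, which is a blanket assumption throughout this line of work.
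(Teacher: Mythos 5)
Your proof is correct, and it reaches the theorem by essentially the same mechanism as the paper's, just run in the contrapositive direction. The paper assumes $y'\notin Y$ and $f_{s(k)}(y')\le a_{s(k)}$, picks a $y\in Y$ dominating $y'$, and does a case analysis on the first index $i$ at which $y'$ beats $y$ ($i<k$ impossible, $i>k$ killed by the shared lexicographic tail, $i=k$ yielding conclusion~3 via $\hat a$); you instead assume conclusions~2 and~3 fail and verify the two membership conditions for $Y$ directly. The correspondence is tight: your first condition absorbs the paper's $i<k$ and $i=k$ cases (with the hypothesis $\hat a\le f_{s(k)}(y')$ doing exactly the work of the $i=k$ case), and your second condition is its $i>k$ case. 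What your version buys is rigour at the step the paper glosses over: the assertion ``as $y'$ is dominated in $Y$, let $y\in Y$ be an element that dominates $y'$'' silently requires both unpacking the compound (Pareto-on-the-first-block plus lexicographic-tail) meaning of non-domination for an $\OIP$ and the well-foundedness argument you make explicit in order to replace an arbitrary dominating feasible point by one actually lying in $Y$. You also spell out why $s=_{n-k}s'$ makes the two $k$-level problems literally identical, and you handle the degenerate case $Y=\emptyset$ (where $\hat a$ is undefined), neither of which the paper addresses.
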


\begin{proof}
This holds trivially if either $y' \in Y$ or $f_{s(k)}(y') > a_{s(k)}$ so assume $y' \not\in Y$ and $f_{s(k)}(y') \leq a_{s(k)}$.
Then as $y'$ is dominated in $Y$, let $y \in Y$ be an element that dominates $y'$ in $Y$.
Let $i$ be the smallest integer such that $f_{s(i)}(y') < f_{s(i)}(y)$.
There must be such an $i$ as otherwise $y$ would also dominate $y'$ in $Y'$.
We will take cases on $i$.

If $i > k$ then $y$ and $y'$ obtain equal values for the first $k$ objectives.
However, the last $n-k$ objectives are considered in lexicographic order, as determined by $s$.
As $y$ and $y'$ are both feasible for $\OIP^n_{s'}(k, (a_{s(k+1)},\dots,a_{s(n)}))$, there must be some $j$ such that for $j' < j$, $f_{s(j')}(y') = f_{s(j')}(y)$, and $f_{s(j)}(y') < f_{s(j)}(y)$.
However, both $y$ and $y'$ are also feasible for $\OIP^n_s(k-1, (a_{s(k)},\dots,a_{s(n)}))$, and $s =_{n-k} s'$.
Then by the same argument we must find an $i$ such that for $i' < i$, $f_{s(i')}(y) = f_{s(i')}(y)$, and $f_{s(i)}(y) < f_{s(i)}(y')$. This is clearly a contradiction.

If $i < k$, then clearly $y$ cannot dominate $y'$ in $Y$, leading to a contradiction.

Lastly, if $i = k$ then $f_{s(k)}(y') < f_{s(k)}(y) \leq \hat a$.
\end{proof}

Both problems in this theorem do have identical bounds for their final $n-k$ places; this is not a typographical mistake.
This identity between bounds is exactly why threads can share data, and forms the basis of our algorithm.

To make it easier to discuss the sharing of data between threads, we introduce terminology for the region of the objective space which a thread has already searched.
\begin{definition}\label{definition:above}
  Given a problem $P = \OIP^n_s(k, (a_{s(k+1)},\dots,a_{s(n)}))$, we will say that a thread $t$ has {\em found all non-dominated objective vectors above $P$\/} if, for all $j > k$, $t$ has determined all non-dominated objective vectors $x$ to $\OIP^n_s(j, (a_{s(j+1)},\dots,a_{s(n)}))$ which also satisfy $f_{s(j)}(x) > a_{s(j)}$.
\end{definition}

Specifying that a given region of the objective space may be avoided (as all non-dominated objective vectors within it are known) is not always practical.
In comparison, upper bounds may at times be trivially added, as they can simply supersede the upper bound obtained whilst directly solving a constrained lexicographic problem.
However, these upper bounds cannot always be shared.
The following explains exactly when threads are able to share these updated bounds to other threads.
We first present a simplified version of Theorem~\ref{thm:sharing} to help introduce the reader to our approach.

\begin{lemma}\label{lemma:sharing-simple}
  Let $w$ represent a thread which
  \begin{enumerate}
    \item is currently solving 
   $P = \OIP^n_s(n-1,(a_{s(n)}))$, and
 \item has found all solutions above $P$.
  \end{enumerate}
  Then all solutions to the original $\IP$ with $f_{s(n)}(x') \geq a_{s(n)}$ are known.
\end{lemma}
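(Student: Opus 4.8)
The plan is to reduce everything to Theorem~\ref{thm:new-recurse} applied in the boundary case $k=n$. Write $Y$ for the set of non-dominated objective vectors of $P=\OIP^n_s(n-1,(a_{s(n)}))$, the problem $w$ is solving, and note that $\OIP^n_s(n,())$ is just the original $\IP$. It then suffices to show that every non-dominated objective vector $y'$ of $\IP$ with $f_{s(n)}(y')\geq a_{s(n)}$ either is already recorded by $w$ or lies in $Y$ (and hence will be produced by $w$'s current computation). I would split on whether $f_{s(n)}(y')>a_{s(n)}$ or $f_{s(n)}(y')=a_{s(n)}$.

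For the strict case, I would simply unwind Definition~\ref{definition:above}: since here $k=n-1$, the only index $j$ with $j>k$ is $j=n$, so hypothesis~(2) says precisely that $w$ has determined every non-dominated objective vector $x$ of $\OIP^n_s(n,())=\IP$ with $f_{s(n)}(x)>a_{s(n)}$. That disposes of this case at once.

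For the equality case I would invoke Theorem~\ref{thm:new-recurse} with $s'=s$ and $k=n$; the hypothesis $s=_{n-k}s'$ then reads $s=_0 s$, which is vacuous. With $\hat a=\max\{f_{s(n)}(y)\mid y\in Y\}$, the theorem tells us that for each non-dominated objective vector $y'$ of $\IP$ we have $y'\in Y$, or $f_{s(n)}(y')>a_{s(n)}$, or $f_{s(n)}(y')<\hat a$. Every member of $Y$ is feasible for $P$ and so satisfies the constraint $f_{s(n)}(\vec x)\leq a_{s(n)}$ (Remark~\ref{definition:lips}, part~\ref{definition:lips-constraint}), whence $\hat a\leq a_{s(n)}$; therefore both $f_{s(n)}(y')>a_{s(n)}$ and $f_{s(n)}(y')<\hat a\leq a_{s(n)}$ contradict $f_{s(n)}(y')=a_{s(n)}$, forcing $y'\in Y$.

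The only point needing a word of care is the degenerate possibility $Y=\emptyset$, which leaves $\hat a$ undefined; but $Y=\emptyset$ is exactly the statement that $P$ is infeasible, i.e.\ that no feasible solution of $\IP$ attains $f_{s(n)}\leq a_{s(n)}$, so in that situation every relevant $y'$ falls into the strict case and the argument of the second paragraph already covers it. I do not anticipate a genuine obstacle; the one subtlety is keeping the indices straight --- $k=n-1$ for $P$ itself versus $k=n$ in the application of Theorem~\ref{thm:new-recurse} --- and observing that ``found all solutions above $P$'' collapses to knowing the single slice $f_{s(n)}>a_{s(n)}$ of the full problem.
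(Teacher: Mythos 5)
Your proposal is correct, and its core coincides with the paper's proof, which consists of a single sentence: the lemma ``follows trivially from Definition~\ref{definition:above}.'' That is exactly your strict case --- with $k=n-1$ the only admissible index is $j=n$, so hypothesis~(2) literally states that all non-dominated vectors of $\IP$ with $f_{s(n)}(x)>a_{s(n)}$ have been determined. Where you diverge is in taking the ``$\geq$'' in the conclusion seriously and handling the boundary $f_{s(n)}(y')=a_{s(n)}$ via Theorem~\ref{thm:new-recurse} with $k=n$, $s'=s$; that application is legitimate (the hypothesis $s=_0 s'$ is vacuous) and the elimination of the second and third alternatives using $\hat a\leq a_{s(n)}$ is sound, as is your treatment of $Y=\emptyset$. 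The paper does none of this --- indeed its own prose restatement of the lemma quietly downgrades to the strict inequality --- so your argument is the more honest reading of the statement as written. The one caveat worth flagging in your equality case: you conclude $y'\in Y$, where $Y$ is the solution set of the problem $w$ is \emph{currently} solving, so $y'$ is ``known'' only in the sense that $w$ is guaranteed to produce it without help from other threads; that is the operationally relevant sense for bound sharing, but it is not literally ``already determined,'' and it would be worth one sentence in a final write-up to say which reading of ``known'' you are using.
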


This lemma is also given in~\cite{OzlenPettersson2016BiObjective}, and
the proof of this lemma follows trivially from Definition~\ref{definition:above}.
The lemma says that if a thread is solving $\OIP^n_s(n-1,(a_{s(n)}))$, and has found all solutions above this problem, then any other thread can also ignore any solution $x$ for which $f_{s(n)}(x) > a_{s(n)}$.
Other threads will be using other permutations, so the bound on $f_{s(n)}$ may not be the ``last'' bound for other threads.
This sharing of bounds across many objective functions can create a synergy between threads, where one thread can supply a bound to other threads, which in turn means that those threads also find new bounds faster and these new bounds can be shared back to the original thread.

As mentioned, the above lemma is actually a simplified version of our result, and only shares the bounds on the ``last'' objective function.
Theorem~\ref{thm:sharing} is a more general result, describing exactly when bounds on {\em any\/} objective functions may be shared between threads.
The theorem states that if two threads agree, in both permutations and bounds, in their last $j$ positions, then the bound that a given thread has on objective $n-j$ i.e., the objective just ``before'' the last $j$ can be shared to the other thread, and vice-versa.
Lemma~\ref{lemma:sharing-simple} allows the bound on the last objective to be shared globally i.e., all threads can use the bound.
In comparison Theorem~\ref{thm:sharing} describes the sharing of bounds on any objective, but does place restrictions on which other threads can use this bound.

Theorem~\ref{thm:sharing} specifies that bounds can be shared if two threads agree on their permutations in their last $j$ positions (e.g.~$s =_j s'$). 

We now give two examples of the usage of this sharing, before giving the theorem and proof below.
\begin{example}
  Let $s_1 = (5,1,4,2,3)$ and $s_2 = (1,4,5,2,3)$, and let 
$P_1 = \OIP^5_{s_1}(2,(13,15,18))$ and let $P_2 = \OIP^5_{s_2}(2, (8,15,14))$.
Note that $(5,1,4,\underline{2,3}) =_2 (1,4,5,\underline{2,3})$. That is, $s_1$ and $s_2$ have the same elements in the final two positions of each permutation.
Since $P_1$ and $P_2$ do not have the same bounds on $f_{s_1(5)}$, we have to take $j=0$ in Theorem~\ref{thm:sharing}.
This means that the bound on $f_{s_2(5)}$ from $P_2$ can be shared to $P_1$.
The end result is that thread running $P_1$ can immediately set the bound on $f_{s_1(5)}$ to $14$, so the new version of $P_1$ to be solved is $P'_1 = \OIP^n_{s_1}(2,(13,15,14))$.
\end{example}
This example can be followed on to the next example.
\begin{example}
  Take $s_1 = (5,1,4,2,3)$ and $s_2 = (1,4,5,2,3)$ again, and let 
$P_1 = \OIP^5_{s_1}(2,(13,15,14))$ and let $P_2 = \OIP^5_{s_2}(2, (8,15,14))$.
Again, $s_1 =_2 s_2$.
Now $P_1$ and $P_2$ agree on bounds $a_{s(4)}$ and $a_{s(5)}$, so we take $j=2$ in Theorem~\ref{thm:sharing}.
That means that the bound on objective $f_{s_1(3)}$ from $P'_1$ can be given to the thread solving $P_2$, and the bound on objective $f_{s_2(3)}$ from $P_2$ can be shared to the thread solving $P_1$.
More specifically, as $s_2(3) = 5$, the thread solving $P'_1$ can use $f_5(x) \leq 8$ as an upper bound for any new solutions, and as $s_1(3) = 4$, the thread solving $P_2$ can use $f_4(x) \leq 13$ as an upper bound on for any new solutions.

These upper bounds apply even though $P'_1$ would otherwise not have any bound on $f_5$, and that if such a bound makes the problem infeasible then there are no new solutions to $P'_1$ which have not been found by $P_2$.
\end{example}

We now give the exact theorem and proof.
\begin{theorem}\label{thm:sharing}
  Let $t$ represent a thread which
  \begin{enumerate}
    \item is currently solving 
   $P = \OIP^n_s(k-1,(a_{s(k)},\dots,a_{s(n)}))$, and
 \item has found all solutions above $P$.
  \end{enumerate}
  For any other thread $t'$ which is currently solving $P' = \OIP^n_{s'}(k', (a_{s'(k'+1)},\dots,a_{s'(n)}))$, and for any integer $j \geq 0$ such that all the following hold
  \begin{enumerate}
    \item $j < n-k$,
    \item $j < n-k'$,
    \item $s =_{n-j} s'$, and
    \item $a_{s(n-i)} = a'_{s'(n-i)}$ for $0 \leq i < j$,
  \end{enumerate}
  all solutions $x'$ to $P'$ with $f_{s'(n-j)}(x') \geq a_{s(n-j)}$ are known.
\end{theorem}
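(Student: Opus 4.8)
The plan is to reduce Theorem~\ref{thm:sharing} to Theorem~\ref{thm:new-recurse} by a short induction on $j$, peeling off one objective position at a time from the back of the permutation. The base case $j = 0$ is essentially Lemma~\ref{lemma:sharing-simple} applied not to the literal last objective but to the objective in position $n-k+1$ of $P$: since $t$ has found all solutions above $P = \OIP^n_s(k-1,(a_{s(k)},\dots,a_{s(n)}))$, Definition~\ref{definition:above} tells us that for $j' = k$ in particular, $t$ has determined all non-dominated objective vectors $x$ to $\OIP^n_s(k,(a_{s(k+1)},\dots,a_{s(n)}))$ with $f_{s(k)}(x) > a_{s(k)}$. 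Combined with Theorem~\ref{thm:new-recurse} (taking the $s$ there to be our $s$, the $s'$ there to be our $s'$, and using $s =_{n-k} s'$, which follows from $s =_{n-j} s'$ since $j < n-k$ forces $n-j > k$ hence agreement extends to position $k+1$ and beyond), every $y' \in Y'$ either lies in $Y$ (hence is known to $t$), or has $f_{s(k)}(y') > a_{s(k)}$ (hence lies in the region $t$ has already searched), or has $f_{s(k)}(y') < \hat a \le a_{s(k)}$. The first two cases say $y'$ is known; the point of the theorem's hypothesis $f_{s'(n-j)}(x') \ge a_{s(n-j)}$ is precisely to exclude the third. So I would first carefully line up the indices so that position $n-j$ (for $j=0$, position $n$... but here we want position $k$) matches up with the objective being bounded in Theorem~\ref{thm:new-recurse}; in fact the cleanest statement is that the objective $f_{s(k)}$ of $P$ is exactly $f_{s'(n-j)}$ when the permutations agree far enough, and I would verify $s(k) = s'(n-j)$ is forced when $j = n-k-1$ — more generally the induction should be organized around the identity $n - j$ versus $k$.

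Let me restructure: I would instead do the induction by considering, for each $i$ with $0 \le i \le j$, the claim that all solutions $x'$ to $P'$ with $f_{s'(n-i)}(x') \ge a_{s(n-i)}$ that additionally satisfy the already-established bounds $f_{s'(n-\ell)}(x') \le a_{s(n-\ell)}$ for $\ell < i$ are known. Starting from $i = 0$ and marching up to $i = j$. At step $i$, thread $t$, having found all solutions above $P$, has in particular found all non-dominated objective vectors to $\OIP^n_s(n-i, (a_{s(n-i+1)},\dots,a_{s(n)}))$ with $f_{s(n-i)}(x) > a_{s(n-i)}$ (this is exactly the $j' = n-i$ instance in Definition~\ref{definition:above}, which is legitimate since $n-i > n-j \ge k$, so $n-i > k-1$). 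Then Theorem~\ref{thm:new-recurse}, applied with its ``$k$'' equal to $n-i$, its ``$s$'' equal to $s$ and its ``$s'$'' equal to $s'$ (valid because $s =_{n-j} s' =_i s'$ for $i \le j$, and the bounds in positions $n-i+1,\dots,n$ agree by hypothesis~4), gives: every non-dominated objective vector $y'$ of $\OIP^n_{s'}(n-i+1, (\dots))$ either equals a known vector, or has $f_{s(n-i)}(y') > a_{s(n-i)}$, or has $f_{s(n-i)}(y') < \hat a_{n-i}$. By Lemma~\ref{lemma:drop-k}, the non-dominated objective vectors of $P'$ are a subset of those of $\OIP^n_{s'}(n-i+1,(\dots))$ once we have pinned down the bounds in the last $i$ positions, so this transfers to $P'$.

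The bookkeeping is the main obstacle: matching the index conventions of Theorem~\ref{thm:new-recurse} (which is stated with a single ``$k$'' and peels position $k$ off a problem with $k-1$ free objectives) against the index $n-j$ used in Theorem~\ref{thm:sharing}, and making sure that at each inductive step the two problems genuinely satisfy the hypotheses $s =_{n-k} s'$ and equal trailing bounds required by Theorem~\ref{thm:new-recurse}. Hypotheses 1 and 2 ($j < n-k$ and $j < n-k'$) are what guarantee that position $n-j$ is strictly past the ``free'' block in both $P$ and $P'$, so that $f_{s(n-j)}$ and $f_{s'(n-j)}$ are genuinely constrained (lexicographic, not part of the Pareto block) in their respective problems and the recursion of Theorem~\ref{thm:new-recurse} applies; I would flag explicitly where each hypothesis is used. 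Once the index alignment is fixed, each step is just an invocation of Theorem~\ref{thm:new-recurse} plus Lemma~\ref{lemma:drop-k} plus the relevant instance of Definition~\ref{definition:above}, and the third alternative ``$f < \hat a$'' is always killed by the standing hypothesis $f_{s'(n-j)}(x') \ge a_{s(n-j)} \ge \hat a_{n-j}$ (using $\hat a \le a_{s(n-j)}$, which holds because $\hat a$ is a max over vectors feasible for a problem carrying the constraint $f_{s(n-j)} \le a_{s(n-j)}$ — this last inequality should be noted as part of the base observation).
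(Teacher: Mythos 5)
There is a genuine gap, and it comes from routing the argument through Theorem~\ref{thm:new-recurse}. At each of your inductive steps you invoke that theorem with its ``$k$'' set to $n-i$, obtain the trichotomy ``$y'\in Y$, or $f_{s(n-i)}(y')>a_{s(n-i)}$, or $f_{s(n-i)}(y')<\hat a$'', and then treat the first alternative as ``$y'$ equals a known vector''. But the $Y$ in that invocation is the \emph{full} set of non-dominated objective vectors of $\OIP^n_s(n-i-1,(a_{s(n-i)},\dots,a_{s(n)}))$, a relaxation of $P$ with at least $k$ free objectives (since $i\leq j<n-k$). Thread $t$ does not know this set: Definition~\ref{definition:above} only guarantees that $t$ knows, at each level $j'>k-1$, the solutions lying \emph{above} the bound $a_{s(j')}$, not the whole solution set of the relaxation. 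So the first branch of the trichotomy cannot be closed --- and note that a solution with $f_{s'(n-j)}(x')=a_{s(n-j)}$ escapes both of the other two branches and lands squarely in this unresolved one. The induction is also idle as stated: the step-$i$ claims for $i<j$ are never actually consumed when you argue step $j$.

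The paper's proof bypasses Theorem~\ref{thm:new-recurse} entirely and assembles the two ingredients you already identified in a more direct way. Iterating Lemma~\ref{lemma:drop-k}, any solution $x'$ of $P'$ is a solution of the relaxation $\OIP^n_{s'}(n-j,(a'_{s'(n-j+1)},\dots,a'_{s'(n)}))$, which is legitimate because $j<n-k'$. Conditions 3 and 4 make this problem \emph{literally identical} to $\OIP^n_s(n-j,(a_{s(n-j+1)},\dots,a_{s(n)}))$: the trailing permutation entries and trailing bounds coincide, and non-dominance over the leading block of $n-j$ objectives does not depend on their order. The $j'=n-j$ instance of Definition~\ref{definition:above} (valid because $j<n-k$ gives $n-j>k-1$) then says $t$ has already determined every solution of that problem whose value of $f_{s(n-j)}$ meets the stated bound, which is exactly the conclusion. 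No trichotomy, no $\hat a$, no induction. Your instinct to invoke the $j'=n-i$ instances of Definition~\ref{definition:above} was the right one; the extra machinery borrowed from the algorithm's recursion step is what breaks the argument.
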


\begin{proof}
 Let $x'$ be a solution to $P'$.
 Then by Lemma~\ref{lemma:drop-k} $x'$ is also a solution to $\OIP^n_{s'}(n-j, (a'_{s'(n-j+1)},\dots,\allowbreak a'_{s'(n)}))$.
 However by the conditions in this theorem, this problem is identical to $\OIP^n_s(n-j, (a_{s(n-j+1)},\dots,\allowbreak a_{s(n)}))$, and by the definition of {\em all solutions above $P$}, $t$ has found all solutions to $\OIP^n_s(n-j, (a_{s(n-j+1)},\dots,\allowbreak a_{s(n)}))$ with $f_{s'(n-j)}(x') \geq a_{s(n-j)}$.
 
\end{proof}

We can recover Lemma~\ref{lemma:sharing-simple} from this theorem by letting $j=0$.

\subsection{New algorithms}\label{sec:newalgo}
\subsubsection{Efficient Parallel Projection (EPP)}
The objective space for a MOIP can be envisioned as a $k$-dimensional vector space, where each dimension represents one objective function.
The Efficient Projection Partitioning (EPP) algorithm projects the whole solution space down to one dimension.
Given an objective vector $\vec x = (x_1,\ldots,x_n)$, the projection is achieved through the $n$-th projection map $proj_n(\vec x) = x_n$. 
That is, the objective space is partitioned by only considering the values attained by one objective function.
First we need the following lemma.

\begin{lemma}\label{lemma:upper}
  For $n>1$, an non-dominated objective vector to a MOIP with objective functions $f_1,\ldots,f_n$ that achieves a maximum value on $f_n$ is also a non-dominated objective vector for the MOIP on the same feasible solution space but restricted to the objective functions $f_1,\ldots,f_{n-1}$.
\end{lemma}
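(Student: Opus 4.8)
The plan is to prove the contrapositive-style statement directly: take a non-dominated objective vector $\vec z = (z_1,\dots,z_n)$ of the full $n$-objective MOIP that attains the maximum possible value on $f_n$ among all feasible solutions, and show that the truncated vector $(z_1,\dots,z_{n-1})$ is non-dominated for the MOIP restricted to $f_1,\dots,f_{n-1}$. First I would suppose, for contradiction, that $(z_1,\dots,z_{n-1})$ \emph{is} dominated in the $(n-1)$-objective problem. Then there is a feasible solution $\vec x$ with objective vector $(y_1,\dots,y_{n-1},y_n)$ such that $y_k \le z_k$ for all $k \le n-1$ and $y_k < z_k$ for at least one $k \le n-1$.

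The key step is to bring $f_n$ back into the picture. Since $\vec z$ attains the maximum value of $f_n$ over the whole feasible set, and $\vec x$ is feasible, we have $y_n \le z_n$. Combining this with $y_k \le z_k$ for $k \le n-1$, we get $y_k \le z_k$ for all $k \in \{1,\dots,n\}$, and the strict inequality $y_j < z_j$ for some $j \le n-1$ still holds. Hence the full objective vector of $\vec x$ dominates $\vec z$ in the $n$-objective problem, contradicting the assumption that $\vec z$ is non-dominated there. Therefore $(z_1,\dots,z_{n-1})$ must be non-dominated for the restricted problem, which is what we wanted.

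I do not expect a serious obstacle here; the argument is essentially a one-line observation that ``maximal on $f_n$'' makes the $f_n$-coordinate automatically non-increasing under any competing feasible solution, so a dominator in the first $n-1$ coordinates lifts to a dominator in all $n$. The only point worth stating carefully is the role of the hypothesis $n > 1$ (so that the restricted problem has at least one objective and the notion of domination there is meaningful) and the fact that the maximum on $f_n$ is taken over the same feasible solution space, so that the solution $\vec x$ realizing the domination is indeed subject to the same bound $y_n \le z_n$. Everything else is a direct appeal to the definition of domination.
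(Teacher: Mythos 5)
There is a genuine gap, and it comes from how you read the hypothesis. You take ``achieves a maximum value on $f_n$'' to mean that $\vec z$ maximises $f_n$ over the \emph{entire feasible solution space}. The lemma (like Lemma~4.1 of Ozlen and Azizoglu which it restates, and like the way EPP uses it to obtain the upper end $U$ of the range of $f_n$) means the maximum of $f_n$ over the set of \emph{non-dominated} objective vectors of the $n$-objective problem. A point that simultaneously maximises $f_n$ over all feasible solutions and is Pareto non-dominated rarely exists, so under your reading the lemma is vacuous on most instances and cannot justify the bound $U$ in Algorithm~\ref{algo:dp}. Under the intended reading, your key step ``$\vec x$ is feasible, hence $y_n \le z_n$'' is exactly the step that fails: the feasible solution $\vec x$ whose truncation dominates $(z_1,\dots,z_{n-1})$ need not be non-dominated, so nothing bounds $f_n(\vec x)$ by $z_n$; indeed $y_n > z_n$ is \emph{forced} by the non-dominance of $\vec z$, so your contradiction never materialises.

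The repair needs one more idea. Suppose $(z_1,\dots,z_{n-1})$ is dominated by the truncation of some feasible $\vec y$; non-dominance of $\vec z$ in the full problem forces $y_n > z_n$. Now replace $\vec y$ by a non-dominated objective vector $\vec w$ of the full problem with $\vec w \le \vec y$ componentwise; such a $\vec w$ exists because the set of objective vectors of a bounded integer program is finite (this external-stability step is what your argument is missing). Then $w_k \le y_k \le z_k$ for all $k \le n-1$, with strict inequality in some coordinate $j \le n-1$ inherited from $\vec y$. If $w_n \le z_n$ then $\vec w$ dominates $\vec z$, contradicting the non-dominance of $\vec z$; if $w_n > z_n$ then $\vec w$ is a non-dominated vector with a larger $f_n$ value, contradicting the maximality of $z_n$ over the non-dominated set. (The paper itself gives no proof, deferring to Lemma~4.1 of Ozlen and Azizoglu and Theorem~2 of Dhaenens et al., which argue along these lines.)
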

This is a well known lemma; for recent proofs see e.g.~Lemma 4.1 in~\cite{Ozlen2009GeneralApproach} or Theorem 2 in~\cite{Dhaenens2010KPPM}.
EPP first calculates all solutions on the first $n-1$ objective functions recursively, with the solution when $n=1$ being trivial.
The set of non-dominated objective vectors on $n-1$ objectives, along with the above Lemma, is used to determine the maximum value on the $n$-th objective; the minimum is found by simple integer programming.
This gives a range of values which $f_n$ can take, which is divided up equally amongst all threads.

\begin{algorithm}[h]\label{algo:dp}
  \DontPrintSemicolon{}
  \KwData{The MOIP $\IP^n$ on $n$ objective functions, and an integer $T$ representing number of threads to use.}
  \KwResult{The set of non-dominated objective vectors.}

  \eIf{$n = 1$}{
    Solve the single-objective problem and return the solution\;
  } {
    Let $X$ be the feasible solution space for this problem.
    Let $\IP^{n-1}$ be this same problem restricted to the first $n-1$ objective functions.
    Calculate the solutions $Y$ to $\IP^{n-1}$ using Algorithm~\ref{algo:dp}\;
    Let \(U = \max \{ f_n(y) | y\in Y\}\)\;
    Let \(L = \min \{ f_n(y) | y\in X\}\)\;
    Let \(step = \frac{U-L}{t}\)\; 
    \For{$ t \in \{0,\dots, t-1\}$} {
      Let \(l = L + t\times step\)\;
      Let \(u = l + step\)\;
      Start a MOIP solver in a new thread to find all solutions $y$ satisfying $l < f_k(y) \leq u$.
    }
    Return the union of the results from all threads started\;

  }
  \caption{The Efficient Projection Partitioning (EPP) algorithm.}
\end{algorithm}

\subsubsection{CLUSTER and SPREAD}
We next introduce the two algorithms CLUSTER and SPREAD, which apply our permutation parallelisation technique to the algorithm of~\cite{Ozlen2014moipaira}.
First, Algorithm~\ref{algo:new} is the algorithm which will initialise and launch all sub-problems.
The initialisation process lets each thread determine which other threads it might be sending information to, and from which threads it might be receiving information.
Each parallel thread will be running Algorithm~\ref{algo:sub}, where new solutions will be found and new bounds will be calculated and shared.

In Algorithm~\ref{algo:new}, the method for selecting permutations is not specified.
We devise two ways of selecting permutations, which in turn create the two algorithms which we call CLUSTER and SPREAD.
CLUSTER assigns permutations to maximise $i$ where $s =_i s'$ for all selected $s$ and $s'$.
For instance, we could assign $(1,2,3,4,5)$, $(2,1,3,4,5)$, $(1,3,2,4,5)$, $(3,1,2,4,5)$, $(2,3,1,4,5)$ and $(3,2,1,4,5)$ to six threads solving a 5-objective problem.
In other words, all of these have 4 and 5 as their final two elements, and thus can share updates on their third objectives.
These six threads would be sharing updated bounds on deeper levels of the recursion, meaning the algorithms will share bounds more often.
This reduces the time between the determination of a new bound, and when threads can use the new bound, potentially minimising the amount of redundant work completed.
As a downside, though, these bounds might not be shareable with all other threads.

The second option, which we call SPREAD, assigns permutations to minimise $i$ where $s =_i s'$ for all selected $s$ and $s'$.
For instance, this could mean assigning $(1,2,3,4,5)$, $(2,3,4,5,1)$, $(3,4,5,1,2)$, $(4,5,1,2,3)$, $(5,1,2,3,4)$ and $(2,3,4,1,5)$ to six threads solving a 5-objective problem.
All five objectives occur as a ``fifth'' objective in some thread, so every thread will be able to update bounds on every objective.
The sharing of these bounds would mainly happen at the higher level of recursion, i.e.~not as often, but the bounds will be shared to more threads.
We discuss in Section~\ref{sec:discussion} how different selection methods can impact the running time of the algorithm.

\begin{algorithm}\label{algo:new}
  \DontPrintSemicolon
  \KwData{The problem $\IP^n$, and $t$ representing the number of threads to use}
  \KwResult{$ND$: The set of non-dominated objective vectors}
  \Begin{
    Let $L$ be a list of thread details, to be used to tell threads where they are sharing information\;
    \For{$i\in\{1,\dots,t\}$}{
      Create a thread $w$\;
      Select a permutation $s \in S_n$\;
      Create the problem $P_t = \OIP^n_s(n, ())$\;
      Store the details of this thread in $L$\;
    }
    \For{Each element $l$ in $L$}{
      Launch Algorithm~\ref{algo:sub} with the corresponding problem $P_t = \OIP^n_s(n, ())$ taken from $l$, as well as a copy of $L$\;
    }
    Wait for all threads to complete\;
    Let $ND = \bigcup_t \{\text{ solutions to } P_t\}$\;
  }
  \caption{Our new parallel algorithm. This particular algorithm will set up each thread with an appropriately selected permutation $s$. The actual work is done in Algorithm~\ref{algo:sub} which is called from this algorithm.}
\end{algorithm}

\begin{algorithm}\label{algo:sub}
  \DontPrintSemicolon
  \KwData{The problem $\OIP^n_s(k, (a_{s(k+1)},\dots,a_{s(n)}))$, and the details of all other threads solving the same original problem $\IP^n$}
  \KwResult{$ND_k$, the set of non-dominated objective vectors}
  \Begin{
    Set $ND_k = \emptyset$.\;
    \eIf{a relaxation of this problem is already solved \KwSty{and} each solution to said relaxation satisfies the current bounds}{
      Let $ND_k$ be this set of solutions\;
    }{
      \eIf{ $k=1$} {
        Solve the single-objective problem.\;
        \If{the problem is feasible, with solution $x$} {
          Set $ND_k = \{x\}$\;
        }
      }{ 
        Let $a_{s(k)} = \infty$\;
        From $\OIP^n_s(k, (a_{s(k+1)},\dots,a_{s(n)}))$, create $P = \OIP^n_s(k-1, (a_{s(k)}, a_{s(k+1)}, \dots, a_{s(n)}))$.\;
        Solve $P$ using this algorithm\;
        \While{$P$ is feasible}{
          Let $Y$ be the solutions to $P$, as determined by this algorithm\;
          Let $ND_k = ND_k \cup Y$\;
          Let $a_{s(k)} = \max \left\{ a_{s(k)}, \max\{f_k(x)| x \in Y\} \right\} $\;
          \For{Each thread $w$ with corresponding permutation $s'$}{
            Use Theorem~\ref{thm:sharing} to update the bounds on $P$\;
            \If{$s =_{n-k} s'$ \textbf{and} $w$ has found a higher value for $a_{s(k)}$}{
              Update $a_{s(k)}$\;
            }
          }
          Update $P$ with the new value of $a_{s(k)}$\;
          Solve $P$ using this algorithm\;
        }
      }
    }
  }
  \caption{This algorithm calculates actual solutions to the problem at hand.
}
\end{algorithm}

In Theorem~\ref{thm:new-recurse}, we define $\hat a$ to be the maximum value of $f_{s(k)}(y)$ for any solution $y$.
To allow each thread to apply Theorem~\ref{thm:new-recurse} we must therefore share not only updated bounds, but the maximum value of $f_{s(k)}$ that is attained.
Theorem~\ref{thm:new-recurse} then trivially verifies correctness of this algorithm.

\section{Implementation details}\label{sec:impl}

The implementation of this new algorithm is based on AIRA as used in~\cite{OzlenPettersson2016BiObjective}.
The availability of the source code sped up the implementation process.
The implementation is in C++11, and uses the shared memory and threading features of the Standard Template Library to handle all thread creation and data sharing.
The code is published on Github (see~\cite{moip_aira}), and test cases are also provided (see~\cite{figshare:3obj,figshare:4obj}) for others to utilise.
Our implementations, including the comparison algorithms from the following section, were verified by comparing results between the various implementations against the known results taken from~\cite{Ozlen2014moipaira}.

\subsection{Comparison algorithms}
We compare the running time of both variants of our algorithm against the following algorithms.
AIRA by~\cite{Ozlen2014moipaira} is a state of the art MOIP solver, which uses CPLEX as an single-objective IP solver internally.
In recent results, such as~\cite{Dächert2015}, AIRA was shown to be one of two algorithms to outperform all others, with the second being V-SPLIT which we discuss below.
One very simple method of parallelising AIRA, or indeed most MOIP algorithms, is to allow the IP solver to utilise more threads.
This technique was also seen in~\cite{boland2015balancedbox}, and we call such an improvement CPLEX.
We do not expect that CPLEX will be competitive in this setting, as CPLEX would not understand the whole MOIP problem.
Instead, these numbers display the significant improvements that can be achieved by designing algorithms to suit parallelisation.

The second comparison algorithm is K-PPM, as described in~\cite{Dhaenens2010KPPM}.
This one of the only recent general MOIP algorithms that is specifically described as being parallel.
K-PPM utilises a 3-step process to create a number of sub-problems.
The first phase calculates the ideal and nadir points of the given problem by recursively solving smaller problems.
This does have a cost, one that the authors of K-PPM discussed in~\cite{Dhaenens2010KPPM}.
We chose to implement K-PPM exactly as they described it, so as to not complicate the results.
These ideal and nadir points are used in the second phase to calculate some well-distributed solutions, which in turn are used to partition the solution space.
This partitioning of the solution space creates a number of sub-problems.
Each of these can be solved in parallel by either a generic serial MOIP solver, or potentially a specialised solver.
We chose to use AIRA as the generic MOIP solver for K-PPM, as it is a modern and open source generic MOIP solver, and being very similar to Algorithm~\ref{algo:new} this will reduce any differences caused by the MOIP solver chosen and will instead allow us to highlight the differences due to our new parallelisation technique.

The final comparison algorithm is V-SPLIT, as described in~\cite{Dächert2015}.
V-SPLIT follows a common approach in MOIP algorithms, where new non-dominated objective vectors are found and then used to reduce and/or partition the objective space.
Other recent algorithms that take such an approach can be found in~\cite{boland2015triangle,boland2015balancedbox,Kirlik2014NewAlgorithm,Laumanns2006EfficientAdaptive}.
V-SPLIT, as given in~\cite{Dächert2015}, is only suitable for 3-objective problems.
However we still use V-SPLIT as the only recent comparison between sequential exact MOIP algorithms (\cite{Dächert2015}) showed that V-SPLIT and AIRA are the two leading competitors in the field.
Two variants of V-SPLIT were presented in~\cite{Dächert2015}, using either a weighted Tchebycheff or an $\epsilon$-constraint method to find individual solutions.
While the $\epsilon$-constraint method was faster, the authors of~\cite{Dächert2015} also point out that this method does not allow the arbitrary selection of the next sub-space to be searched.
As a parallel implementation of V-SPLIT would require the simultaneous searching of multiple sub-spaces, we implement a parallel version of the weighted Tchebycheff V-SPLIT algorithm.
Note that V-SPLIT is specifically designed for 3-objective problems, and as such we can only test it on 3-objective problems.

\subsection{Instance generation}\label{sec:problemgen}
Easily accessible sets of MOIP instances suitable for benchmarking are rare in the literature.
The most commonly referenced ``modern'' set appears to be from~\cite{Laumanns2006EfficientAdaptive}, however the website mentioned in their paper no longer provides the actual instances.
Even when these can be found, they were initially used over 10 years ago, and in our experimentation we find that some instances are trivial to solve simply because computer hardware and IP solvers have improved.

We therefore generate our own set of benchmark instances, using similar techniques to those of~\cite{Laumanns2006EfficientAdaptive} (for knapsack problems), \cite{Przybylski2010TwoPhase} (for assignment problems), and~\cite{Ozpeynirci2010ExactSupported} (for traveling salesman problems).
A knapsack instance is generated by randomly assigned an integer weight (uniformly at random in the range $\{60,\ldots,100\}$) to each of $n$ items. The upper bound on the total weight of the selected items is set to be half of the total weight of all items.
Each objective function is chosen in a similar manner, with the coefficients for each item drawn uniformly at random from the range $\{[60,\ldots,100\}$.
Assignment problems are generated in the manner of~\cite{Przybylski2010TwoPhase}, with objective function coefficients drawn uniformly at random from $\{0,\ldots,20\}$.
We also generate instances of the traveling salesman problem as per~\cite{Ozpeynirci2010ExactSupported}. We place cities on a $1000 \times 1000$ plane by assigning integer coordinates to cities, and round the Euclidean distance between any two cities to an integer value.

For each type of problem and each size parameter, we generate five instances. All of these test instances, including some which we could not solve, are provided for further research (\cite{figshare:3obj,figshare:4obj}) and the generator is available at \url{https://github.com/WPettersson/ProblemGenerator}. Specifics on these instances can be seen in Tables \ref{table:3obj-probs} and \ref{table:4obj-probs}. Note that the 3-objective and 4-objective problems were generated independently.

\begin{table}\centering\small
\begin{tabular}{c|ccccc}
Problem & Integer variables & Binary variables & Constraints & Objectives & Non-dominated solutions\\
\hline
AP10 & 0 & 100 & 20 & 3 & 207 \\
AP15 & 0 & 225 & 30 & 3 & 512 \\
AP20 & 0 & 400 & 40 & 3 & 1515 \\
AP25 & 0 & 625 & 50 & 3 & 3333 \\
AP30 & 0 & 900 & 60 & 3 & 6900 \\
AP40 & 0 & 1600 & 80 & 3 & 13403 \\
KP50 & 0 & 50 & 1 & 3 & 426 \\
KP75 & 0 & 75 & 1 & 3 & 1166 \\
KP100 & 0 & 100 & 1 & 3 & 1483 \\
KP125 & 0 & 125 & 1 & 3 & 3058 \\
KP150 & 0 & 150 & 1 & 3 & 4069 \\
KP200 & 0 & 200 & 1 & 3 & 13058\\
TSP10 & 10 & 90 & 110 & 3 & 250 \\
TSP12 & 12 & 132 & 156 & 3 & 384 \\
TSP15 & 15 & 210 & 240 & 3 & 1202 \\
TSP20 & 20 & 380 & 420 & 3 & 3237 \\
TSP30 & 30 & 870 & 930 & 3 & 11651 
\end{tabular}
\caption{Statistics on the randomly generated 3-objective problems}\label{table:3obj-probs}
\end{table}

\begin{table}\centering\small
\begin{tabular}{c|ccccc}
Problem & Integer variables & Binary variables & Constraints & Objectives & Non-dominated solutions \\
\hline
AP05 & 0 & 25 & 10 & 4 & 23 \\
AP08 & 0 & 64 & 16 & 4 & 269 \\
AP10 & 0 & 100 & 20 & 4 & 679 \\
AP11 & 0 & 121 & 22 & 4 & 2672 \\
AP12 & 0 & 144 & 24 & 4 & 1665 \\
AP15 & 0 & 225 & 30 & 4 & 15535 \\
AP20 & 0 & 400 & 40 & 4 & 28274 \\
KP20 & 0 & 20 & 1 & 4 & 43 \\
KP40 & 0 & 40 & 1 & 4 & 632 \\
KP60 & 0 & 60 & 1 & 4 & 2756 \\
KP80 & 0 & 80 & 1 & 4 & 3733 \\
TSP06 & 6 & 30 & 42 & 4 & 50 \\
TSP08 & 8 & 56 & 72 & 4 & 253 \\
TSP10 & 10 & 90 & 110 & 4 & 683 \\
TSP12 & 12 & 132 & 156 & 4 & 3036 \\
TSP15 & 15 & 210 & 240 & 4 & 8489
\end{tabular}
\caption{Statistics on the randomly generated 4-objective problems}\label{table:4obj-probs}
\end{table}

\subsection{Execution environment}

We ran our implementation on the Raijin, a supercomputer run by the National Computing Infrastructure in Australia, which utilises Intel Xeon E5--2670 CPUs running at 2.60GHz.
Our code was compiled with GCC 4.9.0, and we used CPLEX 12.7.0 as our single objective IP solver, and settings for CPLEX were left at their default, except to limit the number of threads which CPLEX could internally spawn, and also enable deterministic parallelism in such cases where CPLEX would spawn multiple threads.
We ran each algorithm over the 3-objective and 4-objective problems described in Section \ref{sec:problemgen} (for 2-objective problems, our new algorithms reduce to the much simpler case with no synergy as given in \cite{OzlenPettersson2016BiObjective}, which also includes experimental results).
The aim of this computational study is to compare the scalability of our new parallel algorithms to existing parallel algorithms from the literature.
We tested each algorithm with 3 and 6 threads for 3-objective problems, and 4, 8, and 12 threads for 4-objective problems.
For the 3-objective problems, SPREAD and CLUSTER with 6 threads are equivalent, as there are only 6 permutations in $S_3$.
We also ran the non-parallel AIRA on all of these problems to see whether the parallel algorithms actually improved the running time.
We have excluded the running times for problems which were solved very quickly (under one second) as well as problems which did not complete in the given time limits (48 hours) across all algorithms.
These harder instances are provided in \cite{figshare:3obj,figshare:4obj} to challenge further research in this area.
Additionally, we do not include running times for CPLEX on 12 threads as it had already showed minimal improvement moving to 8 threads.

\begin{table}\small
\begin{tabular}{c|c|cc|cc|cc|cc|c|cc}
& AIRA & \multicolumn{2}{c|}{CPLEX}& \multicolumn{2}{c|}{K-PPM}& \multicolumn{2}{c|}{EPP}& \multicolumn{2}{c|}{V-SPLIT}& \multicolumn{1}{c|}{SPREAD}& \multicolumn{2}{c}{CLUSTER}\\
Threads & 1 & 3 & 6 & 3 & 6 & 3 & 6 & 3 & 6 & 3 & 3 & 6\\
\hline
AP10 & 18.61 & 11.09 & 11.01 & 15.66 & 9.53 & 9.39 & 6.37 & 9.24 & 4.56 & 7.92 & 8.00 & 4.93 \\ 
AP15 & 100 & 61.82 & 58.74 & 72.56 & 48.28 & 47.94 & 29.52 & 45.06 & 22.21 & 42.78 & 43.40 & 24.76 \\ 
AP20 & 405 & 272 & 251 & 290 & 204 & 202 & 122 & 183 & 90.51 & 176 & 181 & 93.15 \\ 
AP25 & 1085 & 755 & 681 & 792 & 566 & 535 & 315 & 524 & 260 & 468 & 479 & 233 \\ 
AP30 & 2706 & 2743 & 1713 & 1818 & 1399 & 1396 & 801 & 1463 & 713 & 1185 & 1189 & 582 \\ 
AP40 & 7051 & 6330 & 4624 & 4998 & 4056 & 4016 & 2183 & 4879 & 2445 & 3104 & 3148 & 1530 \\ 

KP50 & 38.62 & 38.64 & 38.90 & 34.98 & 21.69 & 21.78 & 13.62 & 14.23 & 6.99 & 17.37 & 17.98 & 11.31 \\ 
KP75 & 237 & 186 & 179 & 222 & 148 & 137 & 76.44 & 123 & 59.49 & 104 & 104 & 62.80 \\ 
KP100 & 667 & 491 & 461 & 572 & 454 & 412 & 259 & 472 & 219 & 309 & 309 & 177 \\ 
KP125 & 2063 & 1255 & 1102 & 1644 & 1189 & 1151 & 727 & 1488 & 727 & 866 & 869 & 524 \\ 
KP150 & 3338 & 4223 & 1883 & 2524 & 2284 & 1816 & 1099 & 3452 & 1585 & 1467 & 1503 & 802 \\ 
KP200 & 18643 & 9118 & 8331 & 13176 & 11087 & 9946 & 6897 & 11408 & 5790 & 8120 & 8273 & 3788 \\ 

TSP10 & 37.34 & 33.54 & 24.05 & 36.26 & 21.96 & 21.38 & 12.17 & 15.01 & 7.53 & 15.84 & 15.68 & 12.30 \\ 
TSP12 & 68.00 & 59.52 & 44.21 & 60.69 & 34.46 & 36.41 & 20.59 & 28.23 & 14.56 & 29.30 & 29.02 & 22.18 \\ 
TSP15 & 443 & 365 & 294 & 313 & 173 & 233 & 137 & 199 & 102 & 187 & 181 & 138 \\ 
TSP20 & 2201 & 1663 & 1418 & 1399 & 931 & 1203 & 691 & 1677 & 818 & 896 & 864 & 670 \\ 
TSP30 & 18067 & 11700 & 10044 & 10310 & 6713 & 8567 & 5060 & 34551 & 17225 & 7284 & 6990 & 5036 \\ 

\end{tabular}
\caption{Running times of each algorithm and various thread counts on a number of assignment, knapsack and traveling salesman problems with three objectives. As CLUSTER and SPREAD are identical when using 6 threads, we only list the running times under the column CLUSTER.}\label{table:3obj-times}
\end{table}

\begin{table}\small
\begin{tabular}{c|c|c|c|cc|cc|c|cc}
& Solutions & AIRA & \multicolumn{1}{c|}{K-PPM}& \multicolumn{2}{c|}{EPP}& \multicolumn{2}{c|}{V-SPLIT}& \multicolumn{1}{c|}{SPREAD}& \multicolumn{2}{c}{CLUSTER}\\
Threads & $|ND|$ & 1 & 3/6 & 3 & 6 & 3 & 6 & 3 & 3 & 6\\
\hline
AP10 & 207 & 1021 & 2190 & 1153 & 1241 & 657 & 666 & 1444 & 1444 & 1662 \\ 
AP15 & 512 & 2434 & 4782 & 2652 & 2841 & 1601 & 1626 & 3459 & 3465 & 3951 \\ 
AP20 & 1515 & 7017 & 12900 & 7282 & 7552 & 4741 & 4834 & 9730 & 9727 & 10479 \\ 
AP25 & 3333 & 14914 & 24565 & 15365 & 15736 & 10417 & 10676 & 19685 & 19721 & 19506 \\ 
AP30 & 6900 & 29487 & 47653 & 29933 & 30362 & 21462 & 21826 & 40487 & 40526 & 38475 \\ 
AP40 & 13403 & 52350 & 80497 & 52921 & 53495 & 41562 & 42245 & 74334 & 74423 & 72396 \\ 
KP50 & 426 & 2352 & 5285 & 2526 & 2670 & 893 & 955 & 3474 & 3464 & 3693 \\ 
KP75 & 1166 & 6252 & 14240 & 6490 & 6724 & 2321 & 2432 & 8296 & 8278 & 9687 \\ 
KP100 & 1483 & 7749 & 15197 & 8068 & 8314 & 3149 & 3046 & 10698 & 10719 & 11886 \\ 
KP125 & 3058 & 15289 & 27340 & 15742 & 16164 & 5933 & 6081 & 22983 & 22928 & 22521 \\ 
KP150 & 4069 & 20786 & 39899 & 21515 & 22049 & 6894 & 7325 & 31819 & 31931 & 30387 \\ 
KP200 & 13058 & 67857 & 118222 & 68933 & 70035 & 18142 & 19199 & 93682 & 93902 & 88758 \\ 
TSP10 & 250 & 1448 & 3648 & 1601 & 1744 & 794 & 818 & 2033 & 2028 & 2636 \\ 
TSP12 & 384 & 2200 & 5184 & 2388 & 2565 & 1226 & 1257 & 3138 & 3136 & 4071 \\ 
TSP15 & 1202 & 7090 & 14268 & 7403 & 7691 & 3814 & 3932 & 9722 & 9730 & 12489 \\ 
TSP20 & 3237 & 19093 & 33644 & 19755 & 20382 & 10198 & 10489 & 25201 & 25214 & 33282 \\ 
TSP30 & 11651 & 68502 & 119519 & 69639 & 70711 & 35952 & 36802 & 92422 & 92402 & 115955 \\

\end{tabular}
\caption{The number of non-dominated solutions for each problem, and the number of single-objective IPs solved by each type of algorithm for each problem. Note that K-PPM always solves the same number of IPs, using more threads means that more are solved simultaneously. SPREAD and CLUSTER are the same on 6 threads (as $|S_3|=6$), so the statistics for SPREAD on 6 threads are not shown.}\label{table:3obj-stats}
\end{table}
\begin{table}\scriptsize
\begin{tabular}{c|c|cc|ccc|ccc|ccc|ccc}
& AIRA & \multicolumn{2}{c|}{CPLEX}& \multicolumn{3}{c|}{K-PPM}& \multicolumn{3}{c|}{EPP}& \multicolumn{3}{c|}{CLUSTER}& \multicolumn{3}{c}{SPREAD}\\
Threads & 1 & 4 & 8 & 4 & 8 & 12 & 4 & 8 & 12 & 4 & 8 & 12 & 4 & 8 & 12\\
\hline
AP05 & 0.98 & 0.94 & 0.97 & 4.35 & 2.38 & 1.74 & 0.59 & 0.43 & 0.39 & 0.97 & 0.64 & 0.67 & 0.46 & 0.46 & 0.43 \\ 
AP08 & 67.66 & 37.72 & 36.60 & 121 & 61.52 & 45.35 & 31.72 & 22.12 & 16.11 & 48.80 & 28.64 & 26.82 & 24.27 & 22.45 & 17.54 \\ 
AP10 & 532 & 242 & 230 & 732 & 328 & 241 & 240 & 125 & 97.55 & 322 & 182 & 153 & 185 & 147 & 109 \\ 
AP11 & 1095 & 513 & 476 & 1411 & 642 & 471 & 539 & 279 & 201 & 636 & 353 & 285 & 397 & 285 & 212 \\ 
AP12 & 1284 & 621 & 571 & 1567 & 732 & 533 & 634 & 325 & 246 & 744 & 415 & 340 & 450 & 332 & 251 \\ 
AP15 & 7018 & 3742 & 3061 & 7149 & 3655 & 2757 & 3718 & 1779 & 1287 & 3661 & 1983 & 1560 & 2572 & 1726 & 1239 \\ 
AP20 & 51505 & 31234 & 22761 & 29137 & 17060 & 14499 & 24788 & 12834 & 9126 & 18392 & 11638 & 8841 & 16980 & 11285 & 7595 \\ 
KP20 & 3.56 & 2.96 & 2.99 & 12.69 & 6.54 & 4.81 & 2.38 & 1.61 & 1.39 & 3.85 & 2.59 & 2.71 & 2.00 & 1.95 & 1.61 \\ 
KP40 & 177 & 169 & 153 & 275 & 148 & 111 & 116 & 63.24 & 56.16 & 147 & 90.76 & 84.68 & 73.85 & 63.06 & 51.63 \\ 
KP60 & 2529 & 2223 & 2242 & 2329 & 1205 & 1046 & 1564 & 776 & 600 & 1564 & 861 & 689 & 840 & 651 & 487 \\ 
KP80 & 17265 & 6774 & 8429 & 15095 & 8326 & 6617 & 10082 & 7251 & 4089 & 10413 & 5791 & 4528 & 5710 & 4332 & 3200 \\ 
TSP06 & 5.04 & 4.65 & 403.59 & 18.66 & 10.00 & 7.18 & 3.04 & 2.27 & 1.94 & 2.25 & 2.48 & 2.17 & 2.15 & 2.47 & 2.16 \\ 
TSP08 & 74.89 & 59.05 & 2478.3 & 160.70 & 88.01 & 63.91 & 40.33 & 27.97 & 24.29 & 30.03 & 32.43 & 27.79 & 29.81 & 32.53 & 27.87 \\ 
TSP10 & 426 & 343 & 5341 & 692 & 388 & 313 & 219 & 129 & 94.87 & 159 & 167 & 145 & 159 & 168 & 140 \\ 
TSP12 & 6555 & 5180 & 20530 & 5032 & 3437 & 3331 & 4678 & 2753 & 2091 & 1693 & 1711 & 1305 & 1689 & 1714 & 1265 \\ 
TSP15 & 56759 & 47991 & 54186 & 24803 & 13670 & 11183 & 57546 & 28546 & 21367 & 9134 & 7812 & 6165 & 8960 & 7821 & 6077 \\ 

\end{tabular}
\caption{Running times of each algorithm and various thread counts on a number of assignment, knapsack and traveling salesman problems with four objectives.}\label{table:4obj-times}
\end{table}

\begin{table}\scriptsize
\begin{tabular}{c|c|c|c|ccc|ccc|ccc}
& Solutions & AIRA & \multicolumn{1}{c|}{K-PPM}& \multicolumn{3}{c|}{EPP}& \multicolumn{3}{c|}{SPREAD}& \multicolumn{3}{c}{CLUSTER}\\
Threads & $|ND|$ & 1 & 4/8/12 & 4 & 8 & 12 & 4 & 8 & 12 & 4 & 8 & 12 \\
\hline

AP05 & 23 & 299 & 7668 & 468 & 627 & 847 & 808 & 939 & 1359 & 520 & 1006 & 1256 \\ 
AP08 & 269 & 3957 & 36917 & 5715 & 6931 & 8563 & 11157 & 13972 & 18886 & 7176 & 12845 & 14292 \\ 
AP10 & 679 & 9756 & 72036 & 12164 & 14228 & 16293 & 24562 & 29538 & 38643 & 15662 & 26046 & 30105 \\ 
AP11 & 2672 & 37950 & 220255 & 42253 & 45976 & 49841 & 79755 & 91593 & 112093 & 56873 & 83973 & 96842 \\ 
AP12 & 1665 & 21616 & 106153 & 26760 & 30055 & 35150 & 55763 & 63829 & 79325 & 35141 & 55965 & 62788 \\ 
AP15 & 15535 & 203178 & 754893 & 215648 & 227056 & 236495 & 362804 & 391911 & 455529 & 298430 & 399947 & 427234 \\ 
AP20 & 28274 & 335840 & 771710 & 356185 & 375345 & 393145 & 959667 & 664263 & 777579 & 487597 & 641618 & 678781 \\ 
KP20 & 43 & 577 & 7706 & 1057 & 1468 & 1921 & 1854 & 2483 & 3697 & 1156 & 2314 & 2681 \\ 
KP40 & 632 & 10210 & 62758 & 11948 & 13584 & 14947 & 28079 & 31834 & 42649 & 16576 & 27895 & 33640 \\ 
KP60 & 2756 & 39004 & 200124 & 46324 & 51762 & 57568 & 96652 & 110956 & 139901 & 62906 & 100714 & 112003 \\ 
KP80 & 3733 & 52946 & 291961 & 63730 & 72539 & 81240 & 134036 & 164911 & 203856 & 83007 & 121448 & 154670 \\ 
TSP06 & 50 & 758 & 11408 & 1176 & 1520 & 1846 & 1427 & 2713 & 3275 & 1416 & 2708 & 3293 \\ 
TSP08 & 253 & 4927 & 44179 & 6598 & 8103 & 9497 & 8198 & 15337 & 17822 & 8212 & 15339 & 17787 \\ 
TSP10 & 683 & 14249 & 96713 & 16619 & 18660 & 20739 & 22263 & 41123 & 48686 & 22259 & 41202 & 48502 \\ 
TSP12 & 3036 & 65657 & 318298 & 72249 & 78245 & 83711 & 96426 & 181954 & 205593 & 96405 & 181940 & 205976 \\ 
TSP15 & 8489 & 190897 & 877615 & 209635 & 226972 & 243309 & 269413 & 466893 & 548163 & 269360 & 466879 & 547828 \\

\end{tabular}
\caption{The number of non-dominated solutions for each problem, and the number of single-objective IPs solved by each type of algorithm for each problem. Note that K-PPM always solves the same number of IPs, using more threads means that more are solved simultaneously.}\label{table:4obj-stats}
\end{table}

\subsection{Discussion}\label{sec:discussion}
For our 3-objective tests (respectively 4-objective tests), we show the average running time across all 5 instances for each size parameter in Table \ref{table:3obj-times} (respectively Table \ref{table:4obj-times}) while Table \ref{table:3obj-stats} (respectively Table \ref{table:4obj-stats}) shows the average  number of solutions to each problem, as well as the average number of single-objective IPs solved by each algorithm.
The AP, KP or TSP in the name of each test refers to the problem type (either assignment problem, knapsack problem or traveling salesman problem) and the number refers to the respective number of objects in each problem (total number of agents for assignment problems, number of objects for knapsack problems and number of cities for the traveling salesman problems).
First we note that as $|S_3| = 6$, the algorithms SPREAD and CLUSTER are identical when using six threads, so we omit the column for SPREAD on six threads as it is identical to the column for CLUSTER on six threads.
Secondly we point out that K-PPM always solves the same number of IPs independently of the number of threads used.

Looking at the running times, we see that CPLEX does gain some, but not much, improvement with parallelisation.
In particular, going from one to three (or four) threads does seem slightly useful, but stepping beyond this is less effective, especially for the smaller problems.
This is consistent with expectation for this approach.

K-PPM does improve as more threads are introduced, performing better than CPLEX.
However, it is in turn beaten by the remaining four algorithms. K-PPM solves more single-objective IPs than all other algorithms, which may explain why it does not perform as well.

V-SPLIT finds all solutions whilst solving the fewest number of IPs. We note that V-SPLIT does solve slightly more IPs when using 6 threads as compared to 3, which would seem to be at odds with the theory in~\cite{Dächert2015}, however as we implemented a parallel version of V-SPLIT some of the theoretical results will no longer hold for our implementation.

For smaller problems, we see that V-SPLIT marginally outperforms EPP.
In~\cite{Dächert2015} the largest problem solved was a knapsack problem with 50 objects, equivalent to our smallest knapsack problem.
Our timing results do then correlate with those from~\cite{Dächert2015}.
However as the problems get bigger, EPP performs better than VSPLIT.
It is surprising that EPP should be competitive, as initially EPP seems like a very basic parallel algorithm, and EPP does solve at times significantly more IPs than V-SPLIT.
This may be explained by the new constraints added by V-SPLIT to search only a specified region. V-SPLIT must add three lower bounds and three upper bounds for each region, while EPP is only required to add one lower bound and one upper bound.

Both CLUSTER and SPREAD display performance improvements as more threads are utilised, and for the larger problems solved both outperform all other algorithms.
On 3-objective problems, and with three threads, both CLUSTER and SPREAD appear to perform similarily.
This is not too surprising, as there are only 6 possible permutations to choose from, so the difference is not as evident.
Clearly as $|S_3|=6$ SPREAD and CLUSTER are identical on 6 threads and so only one column is shown.

The difference between CLUSTER and SPREAD becomes more evident on 4-objective problems, where we only choose 4 (or 8 or 12) of the possible 24 permutations.
On a significant proportion of our test cases we see that SPREAD beats EPP, but EPP beats CLUSTER.
An analysis of the running of CLUSTER and SPREAD gives one possible explanation for the difference in running times between the two.
When solving a biobjective problem (such as $\OIP(2,(a_3,\dots,a_n))$, variants of which get solved repeatedly), the algorithms often only find one or two new solutions i.e., solutions which aren't found via a relaxation.
However, if two threads are attempting to solve a biobjective problem from two different permutations, there is only ever a performance increase if they can solve for different solutions, which requires at least 3 new solutions in each biobjective problem.
This was very rare in our randomly-generated problems.
It is definitely plausible that there exist problems where each new biobjective problem has numerous solutions, and in these cases we believe that the CLUSTER algorithm may perform better, but we are not aware of any research into finding such problems.

\section{Conclusion}\label{sec:conclusion}

We demonstrate a new paradigm for approaching parallelisation in multi-objective optimisation problems.
By utilising different permutations of objective functions, our new theory presents many different directions from which a MOIP problem can be solved.
This allows parallel algorithms to start searching almost immediately for solutions to the problem, rather than spending time trying to find an equitable split of the search space.
The threads are also able to communicate in real time, and this communication creates a synergy where each thread can reduce the running time of all other threads, which in turn can speed up the first thread.

We give the first comparative look at the running time of exact MOIP algorithms in parallel settings.
This shows that even some seemingly sequential algorithms such as V-SPLIT can benefit from parallelisation.
We also introduce three of our own new parallel algorithms, along with implementations.
All three new algorithms perform competitively on the smaller test cases, and on larger test cases we significantly outperform existing results.
This may prompt more study into larger and more complex MOIP problems, problems which until now may have been impractical to solve.

Two of our new algorithms utilise the synergistic theory we present.
One of these, SPREAD, significantly outperforms all other algorithms on the larger test cases, including the other synergistic algorithm CLUSTER.
The difference between SPREAD and CLUSTER is how permutations are chosen.
It may be useful to further study how this choice may affect the running time of our algorithms, especially as it relates to specific MOIP problems.
The extension of EPP to projections to two or more dimensions may also prove useful in scenarios where many threads are available.

The publication of our implementations as well as our algorithms allows the easier comparison of the running time of exact MOIP algorithms, and will hopefully spur further research and development in this field.

\section*{Acknowledgements}
This study was supported by the Australian Research Council under the Discovery Projects funding scheme (project DP140104246).

\appendix

\section{Examples}
The first two examples are detailed walk throughs of solving constrained lexicographic problems where the permutation is the identity permutation.
\begin{example}[Calculating $\OIP_{(1,2,3)}^3(2, (52))$]\label{ex:lip}
Consider the following set of objective vectors
\[
\begin{array}{ccl}
  f_1 & f_2 & f_3 \\ \hline
  (50 & 24 & 44) \\
  (46 & 41 & 41) \\
  (37 & 46 & 37) \\
  (37 & 44 & 42) \\
  (32 & 39 & 54).
\end{array}
\]
The value $(52)$ in the definition of the problem says that we are only interested in objective vectors which satisfy $f_3 \leq 52$.
This immediately rules out $(32,39,54)$, and we no longer use this objective vector for any domination tests, leaving us with the following.
\[
\begin{array}{cc|c}
  f_1 & f_2 & f_3 \\ \hline
  (50 & 24 & 44) \\
  (46 & 41 & 41) \\
  (37 & 46 & 37) \\
  (37 & 44 & 42)
\end{array}
\]
Next, the $2$ indicates that we want to discard any objective vector which is dominated in its first two objective values by some other objective vector which we have not discarded.
This is represented in the table by the columns to the left of the vertical line.
We see that $(37, 46, 37)$ is dominated over the first two objectives by $(37, 44, 42)$.
Even though $37 < 42$, we discard $(37, 46, 37)$ as we only consider the first two objectives.
All remaining objective vectors are not dominated in their first two objective values, so we are done and the non-dominated objective vectors for $\LIP^n(2, (52)$ are $\{ (50, 24, 44), (46,41,41), (37, 44, 42) \}$.
\end{example}

\begin{example}[Calculating $\OIP_{(1,2,3)}^3(1, (48, 43))$]\label{ex:lip2}
  Again we are working from 
\[
\begin{array}{ccl}
  f_1 & f_2 & f_3 \\ \hline
  (50 & 24 & 44) \\
  (46 & 41 & 41) \\
  (37 & 46 & 37) \\
  (37 & 44 & 42) \\
  (32 & 39 & 54).
\end{array}
\]
  We can immediately discard $(50, 24, 44)$ and $(32, 39, 54)$ from the given upper bounds $(48, 43)$, leaving
\[
\begin{array}{c|cc}
  f_1 & f_2 & f_3 \\ \hline
  (46 & 41 & 41) \\
  (37 & 46 & 37) \\
  (37 & 44 & 42).
\end{array}
\]
  We next consider dominance in the first objective only, letting us discard $(46, 41, 41)$.
  This leaves us with
\[
\begin{array}{c|cc}
  f_1 & f_2 & f_3 \\ \hline
  (37 & 46 & 37) \\
  (37 & 44 & 42).
\end{array}
\]
  These are equal in their first objective, so neither dominates the other.
  We then consider the final two objective functions in lexicographic order.
  That is, we consider $f_2$ before $f_3$ and so-on.
  As $44 < 46$, we discard $(37, 46, 37)$ and the set of non-dominated objective vectors for $\OIP_{(1,2,3)}^3(1, (48, 43))$ is $\{ (37, 44, 42)\}$.
\end{example}

We now show how different permutations $s$ affect the ordered variants, ordered lexicographic problems.
\begin{example}[Calculating $\OIP^3_{(2,1,3)}(1, (48,43))$]\label{ex:lips}
  We work from the same initial objective vectors set as the earlier examples.
\[
\begin{array}{ccc}
  f_1 & f_2 & f_3 \\ \hline
  (50 & 24 & 44) \\
  (46 & 41 & 41) \\
  (37 & 46 & 37) \\
  (37 & 44 & 42) \\
  (32 & 39 & 54).
\end{array}
\]
To aid our understanding of how the permutation effects the problem, however, we rearrange the columns according to $s$ to give
\[
\begin{array}{ccc}
  f_2 & f_1 & f_3 \\ \hline
  (24 & 50 & 44) \\
  (41 & 46 & 41) \\
  (46 & 37 & 37) \\
  (44 & 37 & 42) \\
  (39 & 32 & 54).
\end{array}
\]
  We now demonstrate which of these correspond to solutions of $\OIP^3_{(2,1,3)}(1, (48, 43))$.
  First, we discard objective vectors that break the given bounds.
  As $s(2) = 1$, we discard objective vectors with $f_1 > 48$.
  The $48$ refers to an upper bound on $f_1$ due to the permutation $s$.
  This causes us to discard $(50, 24, 44)$ (which appears as $(24, 50, 44)$ in the above table as we re-ordered the columns).
  Also, as $s(3) = 3$, we discard objective vectors with $f_3 > 52$.
  That is, we once again discard $(32,39,54)$.
\[
\begin{array}{c|cc}
  f_2 & f_1 & f_3 \\ \hline
  (41 & 46 & 41) \\
  (46 & 37 & 37) \\
  (44 & 37 & 42) 
\end{array}
\]
  We now consider dominance on objective $f_2$.
  We use $f_2$ as $s(1) = 2$, and see that $(46, 41,41 )$ is the unique solution to attain a minimum on $f_2$.
  Our set of non-dominated objective vectors is $\{ (46, 41, 41)\}$.
\end{example}

\begin{example}[Calculating $\OIP^3_{(1,3,2)}(1, (51, 50))$]\label{ex:lips2}
  We work from the same initial objective vectors, and again permute the columns according to $s$.
\[
\begin{array}{ccc}
  f_1 & f_3 & f_2 \\ \hline
  (50 & 44 & 24) \\
  (46 & 41 & 41) \\
  (37 & 37 & 46) \\
  (37 & 42 & 44) \\
  (32 & 54 & 39)
\end{array}
\]
  As $s(2) = 3$, we discard objective vectors that don't satisfy $f_3 < 51$, that is $(32, 39, 54)$.
  And as $s(3) = 2$, we discard objective vectors that don't satisfy $f_2 < 50$, but all objective vectors satisfy this bound.
  Next we consider dominance across objective $s(1) = 1$.
\[
\begin{array}{c|cc}
  f_1 & f_3 & f_2 \\ \hline
  (50 & 44 & 24) \\
  (46 & 41 & 41) \\
  (37 & 37 & 46) \\
  (37 & 42 & 44)
\end{array}
\]
  Once again we are left with $(37, 46, 36)$ and $(37, 44, 42)$, which are equal in their first objective.
\[
\begin{array}{c|cc}
  f_1 & f_3 & f_2 \\ \hline
  (37 & 37 & 46) \\
  (37 & 42 & 44)
\end{array}
\]
  However, we now consider the remaining two objectives in the order prescribed by $s$.
  As $s(2) = 3$, we consider values of $f_3$ next and as $36 < 42$, we discard $(37, 44, 42)$.
  Therefore the set of non-dominated objective vectors for $\OIP^3_{(1,3,2)}(1, (51, 50))$ is $\{ (37, 46, 36) \}$.
\end{example}

\bibliographystyle{wplain}
\bibliography{bibliography}

\begin{thebibliography}{10}

\bibitem{boland2015triangle}
Natashia Boland, Hadi Charkhgard, and Martin Savelsbergh.
\newblock The triangle splitting method for biobjective mixed integer
  programming.
\newblock In Jon Lee and Jens Vygen, editors, {\em Integer Programming and
  Combinatorial Optimization}, pages 162--173, Cham, 2014. Springer
  International Publishing.

\bibitem{boland2015balancedbox}
Natashia Boland, Hadi Charkhgard, and Martin Savelsbergh.
\newblock A criterion space search algorithm for biobjective integer
  programming: The balanced box method.
\newblock {\em INFORMS Journal on Computing}, 27(4):735--754, 2015,
  https://doi.org/10.1287/ijoc.2015.0657.
\newblock \doi{10.1287/ijoc.2015.0657}.
\newblock URL \url{https://doi.org/10.1287/ijoc.2015.0657}.

\bibitem{cantu2000efficient}
Erick Cantu-Paz.
\newblock {\em Efficient and accurate parallel genetic algorithms}, volume~1.
\newblock Springer Science \& Business Media, 2000.

\bibitem{Dächert2015}
Kerstin D{\"a}chert and Kathrin Klamroth.
\newblock A linear bound on the number of scalarizations needed to solve
  discrete tricriteria optimization problems.
\newblock {\em Journal of Global Optimization}, 61(4):643--676, Apr 2015.
\newblock \doi{10.1007/s10898-014-0205-z}.
\newblock URL \url{https://doi.org/10.1007/s10898-014-0205-z}.

\bibitem{Dhaenens2010KPPM}
C.~Dhaenens, J.~Lemesre, and E.G. Talbi.
\newblock K-ppm: A new exact method to solve multi-objective combinatorial
  optimization problems.
\newblock {\em European Journal of Operational Research}, 200(1):45 -- 53,
  2010.
\newblock \doi{http://dx.doi.org/10.1016/j.ejor.2008.12.034}.
\newblock URL
  \url{http://www.sciencedirect.com/science/article/pii/S0377221708010709}.

\bibitem{Ehrgott2000Survey}
Matthias Ehrgott and Xavier Gandibleux.
\newblock A survey and annotated bibliography of multiobjective combinatorial
  optimization.
\newblock {\em OR-Spektrum}, 22(4):425--460, 2000.
\newblock \doi{10.1007/s002910000046}.
\newblock URL \url{http://dx.doi.org/10.1007/s002910000046}.

\bibitem{Guo2014ScalingExact}
Jianmei Guo, Edward Zulkoski, Rafael Olaechea, Derek Rayside, Krzysztof
  Czarnecki, Sven Apel, and Joanne~M. Atlee.
\newblock Scaling exact multi-objective combinatorial optimization by
  parallelization.
\newblock In {\em Proceedings of the 29th ACM/IEEE International Conference on
  Automated Software Engineering}, ASE '14, pages 409--420, New York, NY, USA,
  2014. ACM.
\newblock \doi{10.1145/2642937.2642971}.
\newblock URL \url{http://doi.acm.org/10.1145/2642937.2642971}.

\bibitem{Kirlik2014NewAlgorithm}
Gokhan Kirlik and Serpil Sayın.
\newblock A new algorithm for generating all nondominated solutions of
  multiobjective discrete optimization problems.
\newblock {\em European Journal of Operational Research}, 232(3):479 -- 488,
  2014.
\newblock \doi{http://dx.doi.org/10.1016/j.ejor.2013.08.001}.
\newblock URL
  \url{http://www.sciencedirect.com/science/article/pii/S0377221713006474}.

\bibitem{Laumanns2006EfficientAdaptive}
Marco Laumanns, Lothar Thiele, and Eckart Zitzler.
\newblock An efficient, adaptive parameter variation scheme for metaheuristics
  based on the epsilon-constraint method.
\newblock {\em European Journal of Operational Research}, 169(3):932 -- 942,
  2006.
\newblock \doi{http://dx.doi.org/10.1016/j.ejor.2004.08.029}.
\newblock URL
  \url{http://www.sciencedirect.com/science/article/pii/S0377221704005715}.

\bibitem{Lemesre2007PPM}
J.~Lemesre, C.~Dhaenens, and E.G. Talbi.
\newblock Parallel partitioning method (ppm): A new exact method to solve
  bi-objective problems.
\newblock {\em Computers \& Operations Research}, 34(8):2450 -- 2462, 2007.
\newblock \doi{http://dx.doi.org/10.1016/j.cor.2005.09.014}.
\newblock URL
  \url{http://www.sciencedirect.com/science/article/pii/S0305054805003059}.

\bibitem{Lokman2013}
Banu Lokman and Murat K{\"o}ksalan.
\newblock Finding all nondominated points of multi-objective integer programs.
\newblock {\em Journal of Global Optimization}, 57(2):347--365, Oct 2013.
\newblock \doi{10.1007/s10898-012-9955-7}.
\newblock URL \url{https://doi.org/10.1007/s10898-012-9955-7}.

\bibitem{Ozlen2009GeneralApproach}
Melih {\"{O}}zlen and Meral Azizo{\u{g}}lu.
\newblock Multi-objective integer programming: A general approach for
  generating all non-dominated solutions.
\newblock {\em European Journal of Operational Research}, 199(1):25 -- 35,
  2009.
\newblock \doi{http://dx.doi.org/10.1016/j.ejor.2008.10.023}.
\newblock URL
  \url{http://www.sciencedirect.com/science/article/pii/S0377221708009624}.

\bibitem{Ozlen2014moipaira}
Melih Ozlen, Benjamin~A. Burton, and Cameron A.~G. MacRae.
\newblock Multi-objective integer programming: An improved recursive algorithm.
\newblock {\em Journal of Optimization Theory and Applications},
  160(2):470--482, 2014.
\newblock \doi{10.1007/s10957-013-0364-y}.

\bibitem{Ozpeynirci2010ExactSupported}
\"Ozg\"ur \"Ozpeynirci and Murat K\"oksalan.
\newblock An exact algorithm for finding extreme supported nondominated points
  of multiobjective mixed integer programs.
\newblock {\em Management Science}, 56(12):2302--2315, 2010.
\newblock \doi{10.1287/mnsc.1100.1248}.
\newblock URL \url{https://doi.org/10.1287/mnsc.1100.1248}.

\bibitem{Parsopoulos2002ParticleSwarm}
K.~E. Parsopoulos and M.~N. Vrahatis.
\newblock Particle swarm optimization method in multiobjective problems.
\newblock In {\em Proceedings of the 2002 ACM Symposium on Applied Computing},
  SAC '02, pages 603--607, New York, NY, USA, 2002. ACM.
\newblock \doi{10.1145/508791.508907}.

\bibitem{Pedroso2017Parallel}
Dorival~M. Pedroso, Mohammad~Reza Bonyadi, and Marcus Gallagher.
\newblock Parallel evolutionary algorithm for single and multi-objective
  optimisation: Differential evolution and constraints handling.
\newblock {\em Applied Soft Computing}, 61:995 -- 1012, 2017.
\newblock \doi{10.1016/j.asoc.2017.09.006}.
\newblock URL
  \url{http://www.sciencedirect.com/science/article/pii/S1568494617305458}.

\bibitem{figshare:3obj}
William Pettersson.
\newblock {3 objective problem test cases}.
\newblock 4 2017.
\newblock \doi{10.6084/m9.figshare.4814695}.
\newblock URL
  \url{https://figshare.com/articles/3_objective_problem_test_cases/4814695}.

\bibitem{figshare:4obj}
William Pettersson.
\newblock {4 objective problem test cases}.
\newblock 4 2017.
\newblock \doi{10.6084/m9.figshare.4814698}.
\newblock URL
  \url{https://figshare.com/articles/4_objective_problem_test_cases/4814698}.

\bibitem{moip_aira}
William Pettersson and Cameron A.~G. MacRae.
\newblock moip\_aira.
\newblock 2014-2017.
\newblock URL \url{https://github.com/WPettersson/moip_aira}.

\bibitem{OzlenPettersson2016BiObjective}
William Pettersson and Melih Ozlen.
\newblock A parallel approach to biobjective integer programming.
\newblock {\em ANZIAM Journal (to appear)}, 2017.

\bibitem{Przybylski2017}
Anthony Przybylski and Xavier Gandibleux.
\newblock Multi-objective branch and bound.
\newblock {\em European Journal of Operational Research}, 260(3):856--872,
  2017.
\newblock \doi{10.1016/j.ejor.2017.01.032}.
\newblock URL
  \url{http://www.sciencedirect.com/science/article/pii/S037722171730067X}.

\bibitem{Przybylski2010TwoPhase}
Anthony Przybylski, Xavier Gandibleux, and Matthias Ehrgott.
\newblock A two phase method for multi-objective integer programming and its
  application to the assignment problem with three objectives.
\newblock {\em Discrete Optimization}, 7(3):149 -- 165, 2010.
\newblock \doi{https://doi.org/10.1016/j.disopt.2010.03.005}.
\newblock URL
  \url{http://www.sciencedirect.com/science/article/pii/S1572528610000125}.

\bibitem{Reed2008ParallelEvolutionary}
Patrick~M. Reed, Joshua~B. Kollat, Matthew~P. Ferringer, and Timothy~G.
  Thompson.
\newblock Parallel evolutionary multi-objective optimization on large,
  heterogeneous clusters: An applications perspective.
\newblock {\em Journal of Aerospace Computing, Information and Communications},
  5(11):460--478, 2008.
\newblock \doi{10.2514/1.36467}.

\bibitem{Sethanan2016Swarming}
Kanchana Sethanan and Woraya Neungmatcha.
\newblock Multi-objective particle swarm optimization for mechanical harvester
  route planning of sugarcane field operations.
\newblock {\em European Journal of Operational Research}, 252(3):969 -- 984,
  2016.
\newblock \doi{http://dx.doi.org/10.1016/j.ejor.2016.01.043}.
\newblock URL
  \url{http://www.sciencedirect.com/science/article/pii/S0377221716000886}.

\bibitem{Yu2017Parallel}
Mingfeng Xie, Qinsong Qian, Juzheng Yu, Weifeng Sun, and Shengli Lu.
\newblock Parallel processing and genetic algorithm-based efficiency
  optimization for multiconverter power system.
\newblock {\em IEEJ Transactions on Electrical and Electronic Engineering},
  12(S1):S179--S180, 2017,
  https://onlinelibrary.wiley.com/doi/pdf/10.1002/tee.22431.
\newblock \doi{10.1002/tee.22431}.
\newblock URL \url{https://onlinelibrary.wiley.com/doi/abs/10.1002/tee.22431}.

\end{thebibliography}

\end{document}